\theoremstyle{plain}
\newtheorem{thm}{Theorem}
\newtheorem{lem}{Lemma}
\newtheorem*{prb*}{Problem}
\newtheorem{prop}[thm]{Proposition}
\newtheorem{rmk}{Remark}
\newtheorem{ex}{Example}
\newtheorem{cor}[thm]{Corollary}
\theoremstyle{definition}
\theoremstyle{remark}
\newcommand{\cm}[1]{}
\newcommand\wt[1]{\widetilde{#1}}
\newcommand{\R}{\mathbb{R}}
\newcommand{\Z}{\mathbb{Z}}
\begin{document}

\title{On Codimension one Embedding of Simplicial Complexes}
\author{Anders Bj\"{o}rner}

\address{Royal Institute of Technology, Department of Mathematics, S-100 44, Stockholm, Sweden.}
\email{bjorner@math.kth.se} 
\author{Afshin Goodarzi}

\address{Free University of Berlin, Department of Mathematics, Discrete Geometry Group, 14195, Berlin,
Germany.}

\email{goodarzi@math.fu-berlin.de}

\dedicatory{Dedicated to the memory of Ji{\v{r}}{\'{\i}} Matou{\v{s}}ek}


\maketitle


\begin{abstract}

We study $d$-dimensional simplicial complexes that are PL
embeddable in $\mathbb{R}^{d+1}$. It is shown that such a complex
must satisfy a certain homological condition. 
The existence of  this obstruction allows us to provide a systematic approach
to deriving upper bounds for the number of 
top-dimensional faces of such complexes,
particularly in low dimensions.

\end{abstract}


\section*{Introduction}


The question of embeddability of a $d$-dimensional 
simplicial complex into $k$-dimensional Euclidean space $\mathbb{R}^{k}$
has a long history. 
In the following section we sketch some of this
background. Technical definitions and details appear in later sections. 
See J. Matou{\v{s}}ek's
book  \cite[chapter 5]{Mato} and his paper with M. Tancer
and U. Wagner \cite{MTW11}  for nice introductions to the field.

In this note
we provide a homological obstruction to codimension one ($k=d+1$) 
piecewise linear (PL) embeddability of simplicial complexes. For
the case of graphs ($d=1$) this kind of obstruction was used by
S.~Mac Lane~\cite{MacLane37}
in his work on planarity.

As corollaries we derive upper bounds for the number of top-dimensional faces in a complex with codimension one PL embedding,
 in terms of the lower dimensional face numbers and Betti numbers. 
For instance, we show that
\[ f_d(\Sigma)\leq 
\frac{\mathrm{g}(\Sigma)}
{\mathrm{g} (\Sigma)-2}
\left(\left( \sum_{i=1}^d(-1)^{i-1}\left(f_{d-i}(\Sigma)-\beta_{d-i}(\Sigma)\right)
\right)-1\right),\]
where $f_i(\Sigma)$ is the number of faces of dimension $i$, $\beta_{i}$ is the Betti number in dimension $i$, and $\mathrm{g}(\Sigma)$ is the girth (smallest size of
a $d$-cycle in non-zero homology).
See Theorem \ref{thm1b} for details. For $d=1$  
and $\mathrm{g}(\Sigma)=3$ this specializes to Euler's $3n-6$ upper bound for the maximal
number of edges of a planar graph. 

 The method used enables us to provide a unified approach and
to give more detailed versions of face number inequalities for such complexes in low dimensions. 
For instance, we obtain that 
\[ f_2(\Sigma)\leq 2\left(f_1(\Sigma)-f_0(\Sigma)-\beta_1(\Sigma)\right)\]
for any connected $2$-dimensional complex $\Sigma$ that PL embeds into 
$\R^3$, see Proposition \ref{2to3}.
Furthermore, we give a new upper bound for the number of facets of 
 complexes with codimension one PL embedding, in terms
 only of the number of vertices. 
This slightly improves the upper bound given by Dey and Pach~\cite{DeyPach}. 

Finally, some of our face number  inequalities 
are adapted to the case of balanced complexes,
i.e., complexes whose
$1$-skeleton is $(d+1)$-colorable in the graph-theoretic sense.

\section*{Background}

The concept of planarity has been of interest to mathematicians ever since the subject of graph theory was founded. For instance, the impossibility 
for a planar graph on $n$ ($\geq 3$) vertices
of having more than $3n-6$ edges was mentioned in a letter from L.~Euler to C.~Goldbach in 1750, see~\cite[p. 75]{BLW}. 

A topological characterisation of planarity was given by K.~Kuratowski in 1929 and independently (a few months later) by O.~Frink and P.A.~Smith. This result asserts that a finite graph is planar if and only if it does not contain a subgraph homeomorphic to $K_5$ or $K_{3,3}$. Since then other characterisations of planarity have been given. Among them one can mention the more combinatorial approaches by H.~Whitney~\cite{W32} and S.~Mac Lane~\cite{MacLane37}, and the more topological approach of H.~Hanani and W.T.~Tutte (see~\cite{Tutte}, for instance). 

What can be said about the situation in higher dimensions?
Let $\Sigma$ be a finite $d$-dimensional simplicial complex. It was known since the early days of topology that $\Sigma$ is linearly embeddable into $\mathbb{R}^{2d+1}$. In his 1933 article, E.~R.~van Kampen~\cite{vanK} showed that this result is best possible, by presenting $d$-dimensional complexes (now known as the van Kampen--Flores complexes) that do not embed into $\mathbb{R}^{2d}$. Thus, the natural question is, for $d\leq k\leq 2d$, does $\Sigma$ admit an embedding into $\R^k$? 
The  most intensively investigated cases
are when $k=2d$ or $k=d+1$. 
Note that these are the two natural generalisations 
to higher dimensions of the concept of planarity. 

There is no satisfactory analogue of Kuratowski's characterisation in higher dimensions. Indeed, for every $d>1$ and $d+1\leq k\leq 2d$, J.~Zaks~\cite{Zaks2} constructed infinitely many pairwise non-homeomorphic 
$d$-dimensional complexes that are mini\-mal with
respect to the property of being not embeddable in $\R^{k}$.

Based on the aforementioned work of van Kampen, in 1957 A.~Shapiro~\cite{Shapiro57} introduced the \emph{van Kampen obstruction}; a cohomological obstruction to embeddability of $d$-dimensional complexes into $\mathbb{R}^{2d}$. See~\cite{MTW11} for a geometric description. The van Kampen obstruction can be seen as a higher-dimensional analogue of the Hanani--Tutte theorem, though the strong version of Hanani--Tutte theorem appeared much later in~\cite{Tutte}. 


\section{Embedding}

 A simplicial complex $\Sigma$ is said to admit a \emph{linear embedding} into $\R^k$ if $\Sigma$ has a geometric realisation $\|\Sigma\|$ in $\R^k$. 
 More generally, $\Sigma$ admits a \emph{topological embedding} into $\R^k$ if there is a continuous injection $\|\Sigma\|\hookrightarrow \R^k$, from some geometric realisation of $\Sigma$ to $\R^k$. 
 An intermediate concept
 is that of \emph{PL embedding}.
 We say that $\Sigma$ is \emph{piecewise linear} (\emph{PL}) embeddable into $\R^k$ if there is a subdivision of 
$\|\Sigma\|$ that linearly embeds into $\R^k$. In this paper we focus on PL embeddings. 
 
 It is a consequence of Steinitz' Theorem~\cite[Lect.~4]{Z} that every planar graph can be drawn in the plane with straight edges. However, for higher dimensional objects the situation is more complicated.

\begin{ex}[Brehm's triangulated M\"{o}bius strip] 
{\rm  In~\cite{NonLinearPL}, Brehm presented a triangulation of the M\"{o}bius strip that can not be geometrically realised in $\R^3$. The idea is simple but elegant: A non null-homotopic curve, different from the center line, and the boundary curve of the M\"{o}bius strip are linked together, with absolute value of the linking number at least $2$. This can easily be visualised by, for instance, considering the blue curve on the left hand side of Figure 1 below. Now, triangulate the M\"{o}bius strip in such a way that the blue curve and the boundary curve are induced triangles; see the right hand side of Figure 1. Two triangles with straight edges in $\R^3$ are either the unlink or the Hopf link. Hence, these two triangles cannot be realised by straight edges. Iterated simplicial suspensions produces examples of $d$-dimensional complexes that are PL embeddable into $\R^{d+1}$ but do not admit a linear embedding. 
}
 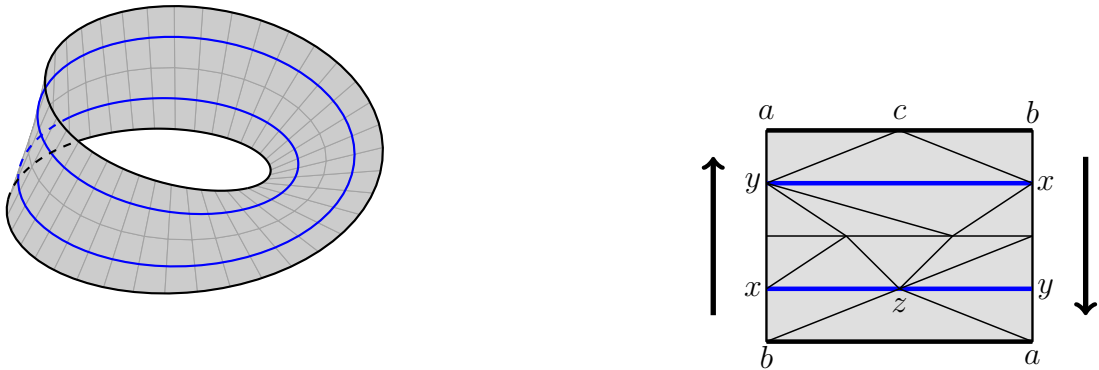
\begin{figure}[H]\label{Fig1}
 
 \centering
 
 \subfloat{
\begin{tikzpicture}
  \begin{axis}[
    hide axis,
    view = {40}{60}
  ]
  \addplot3 [
    surf,
colormap={blackwhite}{gray(0cm)=(0.8); gray(1cm)=(0.8)},
    point meta = x,
    samples    = 40,
    samples y  = 5,
    domain     = 0:360,
    y domain   =-0.8:0.8
  ] (
    {(1+0.5*y*cos(x/2)))*cos(x)},
    {(1+0.5*y*cos(x/2)))*sin(x)},
    {0.5*y*sin(x/2)}
  );

    \addplot3 [
samples=100,
    domain=-360:-185, 
    samples y=0,
  style=  {thick,blue}
  ] (
    {(1+0.2*cos(x/2)))*cos(x)},
    {(1+0.2*cos(x/2)))*sin(x)},
    {0.2*sin(x/2)}
  );

    \addplot3 [
samples=100,
    domain=-185:-140, 
    samples y=0,
  style=  {dashed, thick,blue}
  ] (
    {(1+0.2*cos(x/2)))*cos(x)},
    {(1+0.2*cos(x/2)))*sin(x)},
    {0.2*sin(x/2)}
  );

    \addplot3 [
samples=100,
    domain=-140:360, 
    samples y=0,
  style=  {thick,blue}
  ] (
    {(1+0.2*cos(x/2)))*cos(x)},
    {(1+0.2*cos(x/2)))*sin(x)},
    {0.2*sin(x/2)}
  );

    \addplot3 [
samples=100,
    domain=-140:360, 
    samples y=0,
   thick
     ] (
    {(1+0.4*cos(x/2)))*cos(x)},
    {(1+0.4*cos(x/2)))*sin(x)},
    {0.4*sin(x/2)}
  );
  
    \addplot3 [
samples=100,
    domain=-192:-140, 
    samples y=0,
  style={dashed,  thick}
     ] (
    {(1+0.4*cos(x/2)))*cos(x)},
    {(1+0.4*cos(x/2)))*sin(x)},
    {0.4*sin(x/2)}
  );

    \addplot3 [
samples=100,
    domain=-360:-192, 
    samples y=0,
    thick
     ] (
    {(1+0.4*cos(x/2)))*cos(x)},
    {(1+0.4*cos(x/2)))*sin(x)},
    {0.4*sin(x/2)}
  );

  \end{axis}
  
\end{tikzpicture}

\hspace{3cm}

\begin{tikzpicture}[line join=bevel,z=-5.5,scale=0.35]

\coordinate (a) at (0,8);
\coordinate (b) at (0,0);
\coordinate (c) at (0,6);
\coordinate (d) at (0,4);
\coordinate (e) at (0,2);
\coordinate (f) at (5,8);
\coordinate (g) at (5,2);
\coordinate (h) at (3,4);
\coordinate (i) at (7,4);
\coordinate (j) at (10,8);
\coordinate (k) at (10,6);
\coordinate (l) at (10,4);
\coordinate (m) at (10,2);
\coordinate (n) at (10,0);

\draw[thick,fill=gray!25] (a)--(b)--(n)--(j)--cycle;

\draw [line width=0.6mm,black] (a) -- (j);
\draw [line width=0.6mm,black] (b) -- (n);

\draw [line width=0.6mm,blue] (c) -- (k);

\draw [line width=0.6mm,blue] (m) -- (e);

\draw [line width=0.2mm,black] (d) -- (l);
\draw [line width=0.2mm,black] (a) -- (b);
\draw [line width=0.2mm,black] (j) -- (n);
\draw [line width=0.2mm,black] (f) -- (c);
\draw [line width=0.2mm,black] (f) -- (k);
\draw [line width=0.2mm,black] (g) -- (h);
\draw [line width=0.2mm,black] (g) -- (i);
\draw [line width=0.2mm,black] (c) -- (i);
\draw [line width=0.2mm,black] (g) -- (b);
\draw [line width=0.2mm,black] (g) -- (n);
\draw [line width=0.2mm,black] (k) -- (i);
\draw [line width=0.2mm,black] (g) -- (l);
\draw [line width=0.2mm,black] (c) -- (h);
\draw [line width=0.2mm,black] (e) -- (h);

\draw[line width=2pt,->] (-2,1) -- (-2,7);

\draw[line width=2pt,->] (12,7) -- (12,1);

\node at (0,8.7){$a$};
\node at (5,8.7){$c$};
\node at (10,8.7){$b$};

\node at (10,-0.6){$a$};
\node at (0,-0.6){$b$};
\node at (10.5,6){$x$};
\node at (-0.5,2){$x$};

\node at (10.5,2){$y$};
\node at (-0.5,6){$y$};
\node at (5,1.4){$z$};

\end{tikzpicture}

}
\caption{Brehm's triangulated M\"{o}bius strip }
\end{figure}
\end{ex}
The difference between linear and PL embedding is even more dramatic. One can show that the problem of linear embeddability is algorithmically decidable. On the other hand, it is shown in~\cite[Theorem 1.1]{MTW11} that codimension one PL embeddability is algorithmically undecidable for $d\geq 4$. See~\cite{MTW11} for a thorough discussion.

Let us also remark that topological and PL embeddings do not coincide in codimension one. In fact, by the double suspension theorem~\cite{cannon}, the suspension of the Poincar\'{e} homology $3$-sphere topologically embeds into $\R^5$. However, it does not admit a PL embedding into $\R^5$~\cite[p.~576]{Uli}.

\section{Main Results}
 
Let $\Sigma$ be a $d$-dimensional simplicial complex. We consider 
simplicial homology of $\Sigma$ with $\Z_2$ coefficients. Let \(c=\sum\epsilon_{\sigma}\sigma\) be a $d$-chain, where the sum is over all $d$-dimensional faces of $\Sigma$ and $\epsilon_\sigma\in\Z_2$. We let the \emph{support} $\mathrm{supp}(c)$ of $c$ be the set of all $d$-faces $\sigma$ such that $\epsilon_\sigma=1$. 

 Let us say that a basis $\mathcal{B}$ of $H_d(\Sigma;\mathbb{Z}_2)$ is \emph{$m$-complete} if every $d$-dimensional face of $\Sigma$ appears in the support of at most $m$ elements in $\mathcal{B}$. When $d=1$, this definition agrees with Mac Lane's concept of \emph{$m$-fold complete set of cycles} for graphs. He showed that having a $2$-fold complete set of cycles is equivalent to planarity for graphs~\cite{MacLane37}. In this section we generalise one direction of Mac Lane's result. Before doing so, we need to show the following topological invariance property. 
 
 \begin{lem}\label{singular}
 Let $\Sigma$ and $\Gamma$ be two triangulations of a $d$-dimensional topological space $X$. Then $\Sigma$ has an $m$-complete basis if and only if $\Gamma$ has a $m$-complete basis. 
 \end{lem}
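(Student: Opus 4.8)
The plan is to prove the statement in two stages: first to establish that the existence of an $m$-complete basis is invariant under subdivision, and then to connect two arbitrary triangulations of $X$ by subdivisions. The starting observation is that since $\Sigma$ is $d$-dimensional there are no $(d+1)$-faces, so $\partial_{d+1}=0$ and hence $H_d(\Sigma;\Z_2)=Z_d(\Sigma;\Z_2)=\ker\partial_d$. Thus a basis of $H_d(\Sigma;\Z_2)$ is literally a basis of the vector space of top-dimensional cycles, each basis element is an honest $d$-chain with a well-defined support, and $m$-completeness becomes a purely linear-algebraic condition on a basis of $Z_d(\Sigma;\Z_2)$.

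The technical heart is a single subdivision. Let $\Sigma'$ be a subdivision of $\Sigma$. Every $d$-face of $\Sigma'$ lies in a unique $d$-face of $\Sigma$, so the $d$-faces of $\Sigma'$ partition into blocks $P(\sigma)$, one per $d$-face $\sigma$ of $\Sigma$, and the subdivision chain map $\nu$ sends $\sigma\mapsto\sum_{\tau\in P(\sigma)}\tau$. The key point — a \emph{saturation lemma} — is that every top cycle of $\Sigma'$ is constant on each block: for $z'\in Z_d(\Sigma';\Z_2)$ and each $\sigma$, either all of $P(\sigma)$ lies in $\mathrm{supp}(z')$ or none of it does. I would argue this locally: any interior $(d-1)$-face of $\Sigma'$ separating two adjacent pieces of the same $\sigma$ must have coefficient $0$ in $\partial z'=0$, forcing those pieces to share a coefficient, and the pieces of a subdivided simplex are connected through such interior faces. (Equivalently, saturation is immediate from the fact that $\nu$ realizes the standard isomorphism $H_d(\Sigma)\xrightarrow{\sim}H_d(\Sigma')$, hence is onto $Z_d(\Sigma')$.) Consequently $\nu$ restricts to a linear isomorphism $Z_d(\Sigma;\Z_2)\xrightarrow{\sim}Z_d(\Sigma';\Z_2)$, and for any cycle $z$ and any piece $\tau\in P(\sigma)$ one has $\tau\in\mathrm{supp}(\nu z)\iff\sigma\in\mathrm{supp}(z)$. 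Therefore the per-face support count is preserved exactly: for a basis $\mathcal{B}$ and each $\tau\in P(\sigma)$,
\[
\#\{\,b\in\mathcal{B}:\sigma\in\mathrm{supp}(b)\,\}
=\#\{\,b'\in\nu(\mathcal{B}):\tau\in\mathrm{supp}(b')\,\}.
\]
Hence $\mathcal{B}$ is $m$-complete if and only if $\nu(\mathcal{B})$ is, and since $\nu$ is a bijection of bases, the existence of an $m$-complete basis passes in both directions between $\Sigma$ and $\Sigma'$.

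It then remains to pass from $\Sigma$ to the second triangulation $\Gamma$. Here I would invoke a common subdivision $\Delta$ of $\Sigma$ and $\Gamma$: applying the subdivision invariance just proved to $\Sigma\prec\Delta$ and to $\Gamma\prec\Delta$ shows that $\Sigma$ has an $m$-complete basis if and only if $\Delta$ does if and only if $\Gamma$ does. The existence of this common subdivision is the main obstacle and the one genuinely delicate point. In the piecewise-linear category — which is the relevant one throughout, since we deal exclusively with PL embeddings and their subdivisions — any two PL-homeomorphic triangulations admit a common subdivision, so the argument closes. For arbitrary topological triangulations one must be far more cautious, as a naive common subdivision of any two triangulations is exactly the (false) Hauptvermutung; accordingly I would frame the lemma in the PL setting, where the reduction to subdivision invariance is legitimate and the support-count identity above does all the remaining work.
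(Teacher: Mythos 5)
Your argument is correct, but it is genuinely different from the one in the paper. The paper works with singular homology of $X$: it invokes the fact (due to Thom, for $\Z_2$ coefficients) that the classes in $H_d(X;\Z_2)$ can be represented by maps $f^i:M_i\to X$ from $d$-dimensional (pseudo)manifolds, and characterizes the existence of an $m$-complete basis for a triangulation by the condition that the images $f^i(M_i)$ have lower-dimensional $(m{+}1)$-fold intersections; the verification of that characterization is explicitly left to the reader. Your route is purely simplicial: you use that $H_d=Z_d$ in top dimension, prove a saturation lemma (top cycles of a subdivision are constant on the blocks lying over each $d$-face, because interior ridges of a subdivided simplex lie in exactly two pieces and the pieces are strongly connected), conclude that the subdivision chain map is an isomorphism on $Z_d$ preserving per-face support counts exactly, and then pass between two triangulations via a common subdivision. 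This is more elementary and, where it applies, more rigorous than the paper's sketch. The trade-off is generality: as you yourself note, common subdivisions are only guaranteed in the PL category, so your proof establishes the lemma for PL-compatible triangulations rather than for arbitrary triangulations of a topological space as literally stated (the paper's singular-homology route is designed to get full topological invariance, at the price of the unverified ``standard'' claim). This restriction is harmless here: the only use of the lemma in the paper is to pass between $\Sigma$ and its own subdivisions in the proof of Theorem~\ref{2fold}, which your subdivision-invariance argument covers completely.
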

 \begin{proof}
 Let $H_d(X;\mathbb{Z}_2)$ be the singular homology group of $X$ (this is the only place in this paper where we use singular homology theory). We refer to the book~\cite{Munk} by Munkres for the definition and properties of the singular homology.
 
  Let $H_d(X;\mathbb{Z}_2)=\mathbb{Z}_2^r$. We can always assume that there are $d$-dimensional pseudomanifolds $M_1,\ldots,M_r$ and continuous maps $f^i:M_i\rightarrow X$, for $1\leq i\leq r$, so that the $d$-dimensional homology classes of $X$ are $f^i_\sharp[M_i]$, where $[M_i]$ is the fundamental class of $M_i$. We claim that a triangulation of $X$ has an $m$-complete basis if and only if there is a choice of $M_i$ and $f^i$ such that for any subset $I$ of $\{1,2,\ldots,r\}$ of size greater than $m$ one has \[\dim\left(\bigcap_{i\in I} f^i(M_i)\right)<d.\]
  Observe that once the claim is verified the desired statement is immediate. However, the verification of the claim is standard and we leave it to the reader. 
 \end{proof}
 
\begin{rmk}
{\rm Since we are working with $\mathbb{Z}_2$ coefficients, it follows from a result by Thom that, $M_1,\ldots,M_r$ in the proof of Lemma~\ref{singular} can be taken to be closed manifolds. See, for instance, \cite[p. 343]{steenrod}. 
}
\end{rmk}

\begin{thm}\label{2fold}
Let $\Sigma$ be a $d$-dimensional simplicial complex that admits a PL embedding into $\R^{d+1}$. Then $H_d(\Sigma;\mathbb{Z}_2)$ has a $2$-complete basis. 
\end{thm}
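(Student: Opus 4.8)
The plan is to generalise Mac Lane's planarity argument, with the bounded faces of a planar drawing replaced by the connected regions of the complement of the embedded complex in a sphere. First I would reduce to a linear embedding. By the definition of PL embeddability there is a subdivision $\Sigma'$ of $\Sigma$ whose realisation embeds linearly in $\R^{d+1}$; since $\Sigma$ and $\Sigma'$ triangulate the same space, Lemma~\ref{singular} lets me prove the conclusion for $\Sigma'$ and then transfer it back to $\Sigma$. Passing to the one-point compactification, I may regard $\|\Sigma'\|$ as a subcomplex of a triangulation $T$ of the sphere $S^{d+1}$; this extension of the triangulation is exactly where PL-ness is used.

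Next I would examine the complement. Let $R_0,\dots,R_r$ be the connected regions of $S^{d+1}\setminus\|\Sigma'\|$, and for each $j$ let $C_j\in C_{d+1}(T;\Z_2)$ be the sum of the $(d+1)$-simplices of $T$ contained in $\overline{R_j}$; set $z_j:=\partial C_j$. The single geometric input is codimension one: every $d$-simplex of $T$ bounds exactly two $(d+1)$-simplices, one on each ``side''; a $d$-simplex whose two sides lie in different regions must belong to $\|\Sigma'\|$ (otherwise a small neighbourhood of its interior would connect the two sides inside the complement), while a $d$-simplex outside $\|\Sigma'\|$ has both sides in one region. From this I would read off two facts at once: (i) each $z_j$ is a $d$-cycle supported on the $d$-faces of $\Sigma'$, since only the $d$-faces separating $R_j$ from a different region survive in $\partial C_j$; and (ii) a given $d$-face lies in the support of $z_j$ precisely when it separates $R_j$ from another region, hence in exactly two of the $z_j$, or in none when its two sides coincide. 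Property (ii) is the required $2$-completeness, once the $z_j$ are cut down to a basis (deleting a cycle only weakens (ii)).

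It then remains to show that $z_1,\dots,z_r$ form a basis of $H_d(\Sigma';\Z_2)$. Because $\Sigma'$ has dimension $d$, we have $B_d(\Sigma';\Z_2)=0$, so $H_d(\Sigma';\Z_2)=Z_d(\Sigma';\Z_2)$ and independence of the $z_j$ as cycles is the same as independence in homology. For independence, suppose $\sum_{j\in J}z_j=0$ with $J\subseteq\{1,\dots,r\}$; then $\sum_{j\in J}C_j$ is a $(d+1)$-cycle of $T$, and since $H_{d+1}(S^{d+1};\Z_2)=\Z_2$ is generated by the fundamental class $\sum_{\text{all }\tau}\tau$, the disjointly supported sum $\sum_{j\in J}C_j$ must equal $0$ or all of $T$, forcing $J=\varnothing$. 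For the dimension count I would invoke Alexander duality over $\Z_2$: $\widetilde H_0(S^{d+1}\setminus\|\Sigma'\|;\Z_2)\cong \widetilde H^{\,d}(\Sigma';\Z_2)\cong H_d(\Sigma';\Z_2)$, so that $r=\dim H_d(\Sigma';\Z_2)$. Thus $z_1,\dots,z_r$ are $r$ independent classes in an $r$-dimensional space, hence a $2$-complete basis, and Lemma~\ref{singular} returns the statement to $\Sigma$.

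The main obstacle I anticipate is the bookkeeping in the second step: extracting from the one codimension-one fact both the support bound (ii) and the cycle structure of the $z_j$, and ensuring that discarding $z_0$ genuinely leaves a basis rather than merely an independent or a spanning set. The ingredients I am assuming — subdivision invariance (Lemma~\ref{singular}), Alexander duality, and the homology of $S^{d+1}$ — are standard; the content lies entirely in how the separation geometry of a codimension-one complex forces each facet into at most two region boundaries.
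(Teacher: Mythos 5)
Your proposal is correct and follows essentially the same route as the paper's proof: pass to a subdivision $\Sigma'$ sitting inside a triangulated $(d+1)$-sphere (the paper does this by embedding into $\Delta_{d+1}$ and coning over the boundary), take the connected components of the complement, form the boundaries of the sums of top-dimensional simplices in each component, use the codimension-one incidence fact to get $2$-completeness, Alexander duality for the dimension count, and the triviality of $H_{d+1}$ of a proper subcomplex of the sphere for independence. The only differences are cosmetic (indexing the regions from $0$, and defining the chains $C_j$ by containment in $\overline{R_j}$ rather than by barycenters), and your explicit remarks that $B_d(\Sigma')=0$ and that one region's chain must be discarded are points the paper leaves implicit.
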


\begin{proof} First notice that, by Lemma~\ref{singular}, $\Sigma$ has a $2$-complete basis if and only if any subdivision of $\Sigma$ has this property. This allows us to replace $\Sigma$ by a subdivision of $\Sigma$ if needed. Also, observe that since $\Sigma$ is PL embeddable into $\mathbb{R}^{d+1}$, then $\Sigma$ is PL embeddable into the $(d+1)$-simplex $\Delta_{d+1}$. Thus there is a subdivision $\Sigma'$ of $\Sigma$ and a subdivision $\mathbf{B}$ of $\Delta_{d+1}$ such that $\Sigma'$ is a subcomplex of $\mathbf{B}$. So, we may assume that  $\Sigma'$ is a subcomplex of a simplicial $(d+1)$-sphere $\mathbf{S}$, say by embedding $\mathbf{B}$ into a hyperplane $H$ of $\R^{d+2}$ and taking $\mathbf{S}=\{p\}\ast\partial\mathbf{B}\cup\mathbf{B}$, where $p$ is a point outside $H$ and $\ast$ denotes the simplicial cone.

Now, set $r:=\beta_d(\Sigma';\Z_2)+1$. There is nothing to prove if $r=1$. So, we may assume that $r>1$. It follows from Alexander duality~\cite[Theorem 71.1]{Munk} that $\|\mathbf{S}\|-\|\Sigma'\|$ has $r$ connected components, say $K_1,\ldots,K_r$. For $1\leq j\leq r$, let $c_j$ be the formal sum (modulo $2$) of all facets $F$ of $\mathbf{S}$ such that the barycenter of $F$ lies in $K_j$. Let $b_j$ be the boundary $\partial_{d+1} c_j$ of $c_j$. Notice that $b_j\neq 0$, since $r>1$ and therefore, $c_j$ cannot be a $(d+1)$-cycle. 

We will show that $b_1,\ldots,b_{r-1}$ form a $2$-complete basis for $H_d(\Sigma';\mathbb{Z}_2)$. 

Let $\sigma\in\mathrm{supp}(b_j)$ for some $1\leq j\leq r$. Then $\sigma$ is a facet of $\Sigma'$. Otherwise, the facets $F_\sigma$ and $F'_\sigma$ of $\mathbf{S}$ that contain $\sigma$ lie in the same connected component $K_j$. This implies that $F_\sigma$ and $F'_\sigma$ are in the support of $c_j$. Hence, $\sigma\notin\mathrm{supp}(b_j)$, which is a contradiction. Also, observe that there exists exactly one $i\neq j$ such that $\sigma\in\mathrm{supp}(b_i)$, since every codimension one face of $\mathbf{S}$ is in exactly two facets.

It is immediate that $\partial_d b_j=\partial_d\partial_{d+1} c_j=0$, hence every $b_j$ is a $d$-cycle in $\mathbf{S}$. However, since $\mathrm{supp}(b_j)$ is a subset of the set of faces of $\Sigma'$, then every $b_j$ is a $d$-cycle in $\Sigma'$. 

Finally, we have that $\sum_{i\in A} b_i\neq 0$ for all proper subsets $A$ of $\{1,\ldots,r\}$. Otherwise, 
\[\partial_{d+1}\left(\sum_{i\in A} c_i\right)= \left(\sum_{i\in A} \partial_{d+1}c_i\right)=\sum_{i\in A} b_i=0,\]
that is, the subcomplex of $\mathbf{S}$ whose set of facets are $c_i$, $i\in A$, has non-trivial $(d+1)$-dimensional homology. However, this cannot happen, since every proper subcomplex of $\mathbf{S}$ has trivial $(d+1)$-dimensional homology. Therefore, $b_1,\ldots,b_{r-1}$ is a $2$-complete basis for $H_d(\Sigma';\mathbb{Z}_2)$, as promised.  
\end{proof}
\begin{rmk}{\rm 
It might be possible that the conclusion of Theorem~\ref{2fold} is still valid if we consider the more general case of topological embedding. However, since we use Alexander duality, our method would not be directly applicable in that general setting. 
}
\end{rmk}
Notice that the converse to Theorem~\ref{2fold} is obviously false for all $d>1$. For instance, there are $d$-manifolds that do not admit an embedding into $\R^{d+1}$; non-orientable manifolds for example. In fact, it follows from Alexander duality that if $\Sigma$ is embeddable into the $(d+1)$-sphere $\mathbb{S}^{d+1}$, then the cohomology  $H^d(\Sigma;\Z)$ is isomorphic to $\wt{H}_0(\mathbb{S}^{d+1}\setminus\Sigma;\Z)$ and, thus, is torsion-free.

Having Theorem~\ref{2fold} in mind it is tempting to conjecture that if a $d$-dimensional simplicial complex $\Sigma$ embeds into $\R^{d+m-1}$, then $H_d(\Sigma;\mathbb{Z}_2)$ has an $m$-complete basis. The 
following example shows that this is not the case. 

\begin{ex}{\rm 
Let $n$ be an integer and let $\Delta$ be the $2$-dimensional complex obtained by suspending the complete bipartite graph $K_{n,n}$. Clearly, $f(\Delta)=(n+2,n^2+2n,2n^2)$ and $\beta_2(\Delta)=n^2-2n+1$. On the other hand, $\Delta$ (being a suspension of a complex embeddable in $3$-space) is embeddable into $\R^4$. However, we show that for large enough $n$, $H_2(\Delta;\mathbb{Z}_2)$ does not have a $3$-complete basis. First observe that if $\Omega$ is a minimal cycle in $\Delta$, then $\Omega$ has at least $8$ triangles. Now, let $\mathcal{B}$ be a basis for $H_2(\Delta;\mathbb{Z}_2)$ and let $M$ be the $n^2-2n+1$ by $2n^2$ $\{0,1\}$-matrix whose rows are labeled by the elements $\Omega$ of $\mathcal{B}$ and whose columns are labeled by the facets of $\Delta$,
and for which the entry $(F,\Omega)$ is the coefficient of $F$ in $\Omega$. Since the number of facets with non-zero coefficient in each element of $\mathcal{B}$ is at least $8$, the minimum number of $1$s in $M$ is $8(n^2-2n+1)$. On the other hand, if $H_2(\Delta;\mathbb{Z}_2)$ has a $3$-complete basis, then the maximum number of $1$s in $M$ must be $3$ times the number of facets, that is, $6n^2$. Therefore, if $n$ is large enough then $H_2(\Delta;\mathbb{Z}_2)$ does not have a $3$-complete basis. 
}\end{ex}

\section{Face Numbers}

In this section we provide upper bounds for the number of top dimensional faces of complexes that admit a codimension one embedding in terms of the lower dimensional face numbers and Betti numbers. 

For a $d$-dimensional simplicial complex $\Sigma$, with non-trivial top Betti number, let us define the \emph{girth} of $\Sigma$,
denoted $\mathrm{g}(\Sigma)$,
 to be the minimum number of $d$-dimensional faces of a subcomplex with non-zero $d$-dimensional Betti number.
This notion extends the graph theoretic notion of girth
as the minimal size of a circuit. 
 If $\beta_{d}(\Sigma)=0$ we define 
 the girth to be $d+2$. Note that 
the girth of a $d$-dimensional complex satisfies
$\mathrm{g}(\Sigma) \ge d+2$.

\begin{thm}\label{thm1a}
Let $\Sigma$ be a $d$-dimensional simplicial complex such that
$H_d(\Sigma;\mathbb{Z}_2)$ admits a $2$-complete basis.
Then
\begin{equation}\label{eq2}
\mathrm{g}(\Sigma)\left(\beta_d(\Sigma;\Z_2)+1\right)\leq 2f_d(\Sigma).
\end{equation}
\end{thm}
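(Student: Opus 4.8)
The plan is to prove the inequality by a double-counting argument on the incidences between $d$-faces and the elements of a fixed $2$-complete basis, together with one extra observation that supplies the ``$+1$'' in $\beta_d(\Sigma;\Z_2)+1$. Fix a $2$-complete basis $\mathcal{B}=\{z_1,\dots,z_\beta\}$ of $H_d(\Sigma;\Z_2)$, where $\beta=\beta_d(\Sigma;\Z_2)$, and assume $\beta\geq 1$, which is the substantive case. Since $\Sigma$ is $d$-dimensional there are no $(d+1)$-faces, so $H_d(\Sigma;\Z_2)$ coincides with the space of $d$-cycles and each basis element $z_i$ is a genuine nonzero $d$-cycle. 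Moreover $\mathrm{g}(\Sigma)$ equals the least cardinality $|\mathrm{supp}(z)|$ among nonzero $d$-cycles $z$: the subcomplex generated by $\mathrm{supp}(z)$ has exactly $|\mathrm{supp}(z)|$ top-faces and carries $z$ as a nonzero cycle, while conversely any subcomplex with $\beta_d\neq 0$ contains a nonzero $d$-cycle supported on its top-faces. In particular $|\mathrm{supp}(z_i)|\geq \mathrm{g}(\Sigma)$ for every $i$.

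For each $d$-face $F$ let $m(F)$ be the number of basis elements whose support contains $F$; by $2$-completeness $m(F)\in\{0,1,2\}$. Writing $n_j$ for the number of $d$-faces with $m(F)=j$, we have $n_0+n_1+n_2=f_d(\Sigma)$, and counting incidences in two ways gives $\sum_i|\mathrm{supp}(z_i)|=n_1+2n_2$. Combining this with the girth lower bound on each individual support yields
\[ n_1+2n_2 \;=\; \sum_{i=1}^{\beta}|\mathrm{supp}(z_i)| \;\geq\; \beta\,\mathrm{g}(\Sigma). \]

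The extra cycle comes from the total sum $s=z_1+\cdots+z_\beta$. Because the $z_i$ are linearly independent, $s$ is a nonzero class, hence a nonzero $d$-cycle. Over $\Z_2$ the coefficient of $F$ in $s$ is $m(F)\bmod 2$, which equals $1$ \emph{precisely} when $m(F)=1$, since $2$-completeness rules out $m(F)=3$. Therefore $|\mathrm{supp}(s)|=n_1$, and minimality of the girth gives $n_1\geq \mathrm{g}(\Sigma)$. Adding this to the previous display produces
\[ 2(n_1+n_2) \;\geq\; (\beta+1)\,\mathrm{g}(\Sigma), \]
while $n_1+n_2\leq n_0+n_1+n_2=f_d(\Sigma)$ gives $2(n_1+n_2)\leq 2f_d(\Sigma)$. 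Chaining the two inequalities yields $\mathrm{g}(\Sigma)\,(\beta_d(\Sigma;\Z_2)+1)\leq 2f_d(\Sigma)$, as claimed.

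The only genuine idea is the observation that $2$-completeness forces $\mathrm{supp}(s)$ to be exactly the set of faces covered once, so that the single extra cycle $s$ contributes a full $\mathrm{g}(\Sigma)$ to the count; everything else is bookkeeping. The main point requiring care is simply ensuring that $s\neq 0$ and that the ambient notion of girth agrees with minimal support size of a nonzero cycle, as justified above. The degenerate case $\beta_d(\Sigma;\Z_2)=0$ falls outside this argument, since then there is no nonzero cycle to drive it and the girth is fixed to $d+2$ by convention; that case must be addressed separately rather than through the counting scheme.
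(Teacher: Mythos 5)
Your argument is correct in the substantive case $\beta_d(\Sigma;\Z_2)\geq 1$, but it takes a genuinely different route from the paper. The paper does not argue from an arbitrary $2$-complete basis: it recycles the specific cycles $b_1,\dots,b_r$ ($r=\beta_d+1$) constructed in the proof of Theorem~\ref{2fold} from the connected components of the sphere complement via Alexander duality, where each $d$-face lies in exactly zero or two of the supports, and then counts. You instead start from the hypothesis as literally stated --- any $2$-complete basis $z_1,\dots,z_\beta$ --- and manufacture the $(\beta+1)$-st cycle yourself as $s=z_1+\cdots+z_\beta$, observing that $2$-completeness forces $\mathrm{supp}(s)$ to be exactly the set of once-covered faces, so that $n_1\geq\mathrm{g}(\Sigma)$ can be added to the incidence count $n_1+2n_2\geq\beta\,\mathrm{g}(\Sigma)$. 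Your $s$ plays precisely the role of the paper's $b_r$ (which satisfies $b_r=b_1+\cdots+b_{r-1}$ in their construction), but you obtain it algebraically rather than geometrically. This buys something real: your proof establishes the theorem under its stated hypothesis alone, with no reference to an embedding or to the subdivision $\Sigma'$ appearing in the paper's construction, whereas the paper's proof as written only applies to PL-embeddable complexes. Your identification of the girth with the minimal support size of a nonzero $d$-cycle (using that $H_d=Z_d$ in top dimension) is also correctly justified.

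One remark on the degenerate case you set aside: when $\beta_d(\Sigma;\Z_2)=0$ the inequality reads $d+2\leq 2f_d(\Sigma)$ by the girth convention, and this actually fails for, e.g., a single $d$-simplex with $d\geq 1$. So the statement cannot be proved in that case by any argument; the paper's own proof also silently restricts to $r>1$. Your explicit flagging of this is appropriate, and the theorem should be read with the implicit hypothesis $\beta_d(\Sigma;\Z_2)\geq 1$.
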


\begin{proof} Let $r$ and $b_1,\ldots,b_r$ be as defined in the proof of Theorem~\ref{2fold}. On the one hand, for $1\leq j\leq r$, $\mathrm{supp}(b_j)$ has at least $\mathrm{g}(\Sigma)$ elements. On the other hand, a $d$-dimensional face of $\Sigma$ appears, if at all, in the support of two of the $b_j$'s. Therefore, $\mathrm{g}(\Sigma)r\leq 2f_d(\Sigma)$, as desired. 
\end{proof}

To help simplify the notation, let
$\delta_{j} = f_{j}(\Sigma)-\beta_{j}(\Sigma;\Z_2)$,
for all $j$. Then, let
$$ \chi_{j-1}(\Sigma) =
\sum_{i=1}^j(-1)^{i-1} \delta_{j-i}
$$


It follows from the rank-nullity theorem that $\chi_{j-1}(\Sigma) \ge0$ for all $j$. 
These inequalities, sometimes called the 
{\em strong Morse inequalities}, are discussed in Milnor \cite{Mil63},
and appear in slightly sharper form in \cite{BK}.


\begin{thm}\label{thm1b}
Let $\Sigma$ be a $d$-dimensional simplicial complex that admits a PL embedding into $\R^{d+1}$. Then,
\begin{equation}\label{eq3}
f_d(\Sigma)\leq \frac{\mathrm{g}(\Sigma)}
{\mathrm{g} (\Sigma)-2}
(\delta_{d-1} - \delta_{d-2} + \delta_{d-3} - \cdots + \delta_{d-k} -1)
\end{equation}

for all odd $k\ge 1$.
\end{thm}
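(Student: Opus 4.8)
The plan is to combine the two main results already established with the Morse inequalities quoted just before the statement. First I would invoke Theorem~\ref{2fold}: since $\Sigma$ PL embeds into $\R^{d+1}$, the group $H_d(\Sigma;\Z_2)$ has a $2$-complete basis, so the hypothesis of Theorem~\ref{thm1a} is met and
\[
\mathrm{g}(\Sigma)\bigl(\beta_d(\Sigma;\Z_2)+1\bigr)\leq 2f_d(\Sigma).
\]
Writing $\beta_d=f_d-\delta_d$ and using $\mathrm{g}(\Sigma)>2$ (which holds because $\mathrm{g}(\Sigma)\geq d+2\geq 3$), a routine rearrangement turns this into
\[
f_d(\Sigma)\leq \frac{\mathrm{g}(\Sigma)}{\mathrm{g}(\Sigma)-2}\bigl(\delta_d-1\bigr).
\]
This already has the right shape; the task reduces to replacing $\delta_d$ (which still contains the unknown $f_d$) by the truncated alternating sum $A_k:=\sum_{i=1}^{k}(-1)^{i-1}\delta_{d-i}=\delta_{d-1}-\delta_{d-2}+\cdots+\delta_{d-k}$, and since $\frac{\mathrm{g}}{\mathrm{g}-2}>0$ it suffices to prove the purely numerical inequality $\delta_d\leq A_k$ for every odd $k$ with $1\le k\le d$.

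The heart of the argument is therefore this inequality, which I would extract from the strong Morse inequalities $\chi_{j-1}(\Sigma)\geq 0$. The first observation is that the ``top'' instance is an equality: from the definition, $\chi_d(\Sigma)=\delta_d-A_d$, and the Euler--Poincar\'e relation $\sum_{i=0}^d(-1)^i\delta_i=0$ (valid over $\Z_2$ by rank-nullity) gives $\chi_d(\Sigma)=0$, hence $\delta_d=A_d$, the \emph{full} alternating sum. It then remains to compare $A_d$ with its truncation $A_k$. Reindexing the tail $A_d-A_k=\sum_{i=k+1}^d(-1)^{i-1}\delta_{d-i}$ by $\ell=d-i$ and using the identity $\sum_{\ell=0}^{m}(-1)^{\ell}\delta_\ell=(-1)^m\chi_m(\Sigma)$, I expect to arrive at the clean formula
\[
\delta_d-A_k=(-1)^{k}\,\chi_{d-k-1}(\Sigma),\qquad 1\le k<d.
\]
For odd $k$ the sign is $-1$, so $\delta_d-A_k=-\chi_{d-k-1}(\Sigma)\leq 0$, i.e.\ $\delta_d\leq A_k$; and for $k=d$ we already have equality. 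Substituting back into the displayed bound yields $f_d(\Sigma)\leq \frac{\mathrm{g}(\Sigma)}{\mathrm{g}(\Sigma)-2}(A_k-1)$, which is exactly \eqref{eq3}.

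The main obstacle is the sign bookkeeping that makes the parity hypothesis emerge correctly: the inequality $\delta_d\le A_k$ holds \emph{precisely} because $k$ is odd, since for even $k$ the same identity gives $\delta_d-A_k=+\chi_{d-k-1}\geq 0$ and the bound would point the wrong way. I would double-check the two sign changes that combine into the single factor $(-1)^k$ (one from the reindexing $\ell=d-i$, one from $\sum_{\ell=0}^m(-1)^\ell\delta_\ell=(-1)^m\chi_m$), and confirm the boundary behaviour at $k=d$, where $\chi_{d-k-1}$ degenerates and is replaced by the Euler--Poincar\'e equality $\chi_d=0$. As a sanity check, taking $d=1$, $k=1$, $\mathrm{g}=3$ on a connected graph recovers $f_1\leq 3f_0-6$, matching the instance of Euler's bound quoted in the introduction.
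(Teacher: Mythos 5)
Your proposal is correct and follows essentially the same route as the paper: Theorem~\ref{2fold} plus inequality \eqref{eq2} of Theorem~\ref{thm1a}, substitution of the Euler--Poincar\'e identity $\beta_d=f_d-\chi_{d-1}(\Sigma)$, and truncation of the alternating sum using $\chi_{d-k-1}(\Sigma)\ge 0$. You merely spell out the sign bookkeeping (via the identity $\delta_d-A_k=(-1)^k\chi_{d-k-1}(\Sigma)$) that the paper leaves as ``simplify''.
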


\begin{proof}
Our point of departure is the inequality (1) of Theorem~\ref{thm1a}. Replace $\beta_d(\Sigma;\Z_2)$ in the left hand side of the inequality by the 
right hand side of the following form of the Euler-Poincar\'{e} formula:
$$ \beta_{d}(\Sigma;\Z_2) = f_{d}(\Sigma)-\chi_{d-1}(\Sigma),
$$
and then simplify and use $\chi_{d-k-1}\ge 0$
to get the desired inequality. 
\end{proof}

\begin{cor}\label{cor1c}
Let $\Sigma$ be a $d$-dimensional simplicial complex that admits a PL embedding into $\R^{d+1}$. Then,

$$ f_d(\Sigma)\leq \frac{d+2}{d} (f_{d-1}- \beta_{d-1}-1).
$$

\end{cor}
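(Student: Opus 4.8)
The plan is to derive Corollary \ref{cor1c} as a specialization of Theorem \ref{thm1b}. The statement of the corollary is the simplest possible instance of the inequality \eqref{eq3}, and the task is essentially to recognize which choice of parameters produces it. First I would observe that the girth bound $\mathrm{g}(\Sigma) \ge d+2$ established in the preceding discussion controls the fraction $\frac{\mathrm{g}(\Sigma)}{\mathrm{g}(\Sigma)-2}$. Since the map $t \mapsto \frac{t}{t-2}$ is decreasing for $t > 2$, the largest value of this coefficient over all admissible girths is attained at the minimum girth $\mathrm{g}(\Sigma) = d+2$, where it equals $\frac{d+2}{d}$. Thus for any $d$-dimensional complex we have the uniform bound $\frac{\mathrm{g}(\Sigma)}{\mathrm{g}(\Sigma)-2} \le \frac{d+2}{d}$.

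Next I would specialize \eqref{eq3} to the smallest odd value $k=1$. With $k=1$ the alternating sum on the right-hand side truncates to the single term $\delta_{d-1}$, so the parenthetical expression becomes simply $\delta_{d-1} - 1 = f_{d-1}(\Sigma) - \beta_{d-1}(\Sigma;\Z_2) - 1$. Combining this with the girth estimate from the previous paragraph immediately yields
\[
f_d(\Sigma) \le \frac{\mathrm{g}(\Sigma)}{\mathrm{g}(\Sigma)-2}(\delta_{d-1} - 1) \le \frac{d+2}{d}(f_{d-1} - \beta_{d-1} - 1),
\]
which is exactly the asserted inequality.

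The only point requiring a moment of care is the direction of the monotonicity argument, since replacing the true coefficient $\frac{\mathrm{g}(\Sigma)}{\mathrm{g}(\Sigma)-2}$ by the larger value $\frac{d+2}{d}$ weakens the bound but keeps it valid; this is legitimate precisely because the right-hand factor $\delta_{d-1}-1$ is nonnegative. Nonnegativity of $\delta_{d-1} - 1$ follows because $\beta_{d}(\Sigma;\Z_2) + 1 \ge 1$ forces, via \eqref{eq2} and $\mathrm{g}(\Sigma) > 0$, that $f_d(\Sigma) \ge 1$, and then the Morse inequality $\chi_{d-1}(\Sigma) \ge 0$ together with the Euler--Poincaré relation gives $\delta_{d-1} \ge f_d(\Sigma) \ge 1$. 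I expect this sign bookkeeping to be the only genuine obstacle; everything else is direct substitution into an already-proved theorem.
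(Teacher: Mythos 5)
Your proposal is correct and follows the paper's own (one-line) proof: set $k=1$ in Theorem~\ref{thm1b} and use $\mathrm{g}(\Sigma)\ge d+2$ together with the monotonicity of $t\mapsto t/(t-2)$. One small caveat in your sign bookkeeping: the intermediate claim $\delta_{d-1}\ge f_d(\Sigma)$ is false in general (e.g.\ for a $3$-cycle graph $\delta_0=2<3=f_1$); what the Morse inequality $\chi_{d-2}\ge 0$ actually gives is $\delta_{d-1}\ge \chi_{d-1}=f_d-\beta_d\ge 1$, which is still enough to conclude $\delta_{d-1}-1\ge 0$.
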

\begin{proof}
This is the $k=1$  case of Theorem~\ref{thm1b}, using that
$\mathrm{g}(\Sigma) \ge d+2$.
\end{proof}

Next, we focus on balanced simplicial complexes. Recall that a $d$-dimensional simplicial complex is said to be \emph{balanced} if its underlying graph ($1$-skeleton) is $(d+1)$-colorable in the graph theoretic sense. 

\begin{thm}\label{flag1}

Let $\Sigma$ be a balanced $d$-dimensional simplicial complex that admits a PL embedding into $\R^{d+1}$. Then the following hold true:
\begin{enumerate}[{\rm (a)}]
\item\label{item1f} $2^{d}(\beta_d(\Sigma;\Z_2)+1)\leq f_d(\Sigma)$;
\item\label{item2f}$f_d(\Sigma)\leq \frac{2^{d}}{2^{d}-1}
(\chi_{d-1} -1)$.
\end{enumerate}
\end{thm}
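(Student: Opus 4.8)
The plan is to mirror the structure of the unbalanced case (Theorems~\ref{thm1a} and~\ref{thm1b}), but to replace the crude girth bound $\mathrm{g}(\Sigma)\ge d+2$ with the much stronger fact that in a \emph{balanced} complex every $d$-cycle carrying non-zero homology must contain at least $2^{d}$ facets. Part~\eqref{item1f} is then just Theorem~\ref{thm1a} specialized to $\mathrm{g}(\Sigma)\ge 2^{d}$, and part~\eqref{item2f} follows from part~\eqref{item1f} by the same Euler--Poincar\'{e} manipulation used in Theorem~\ref{thm1b} (take $k=1$, i.e. substitute $\beta_d=f_d-\chi_{d-1}$ into the left-hand side and solve for $f_d$). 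So the entire content of the theorem reduces to the combinatorial lemma: a minimal non-bounding $d$-cycle in a balanced complex has support of size $\ge 2^{d}$.

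First I would prove this girth lower bound. Let $z$ be a $d$-cycle with $\partial_d z=0$ and $[z]\neq 0$, and let $\Omega=\mathrm{supp}(z)$ be its set of facets; I may assume $\Omega$ is a minimal such support. The key structural observation is that, since $\Sigma$ is balanced, its vertices are colored by $d+1$ colors and every facet, being a $d$-simplex on $d+1$ vertices, is \emph{rainbow}: it uses each color exactly once. I would then fix one color class, say color $0$, and consider the ``link-type'' structure obtained by deleting color-$0$ vertices: contracting or projecting $\Omega$ onto the remaining $d$ colors. The aim is to set up an inductive doubling argument showing $|\Omega|\ge 2\cdot|\Omega'|$ where $\Omega'$ is an honest non-bounding $(d-1)$-cycle in a balanced $(d-1)$-complex, with base case $|\Omega|\ge 2$ for a $1$-dimensional (graph) cycle that is not a single edge — indeed a non-trivial cycle in a bipartite graph has at least $2^{1}=2$ edges, and more honestly the girth of a bipartite graph is at least $4=2^2$, so I would need to track the indexing carefully to land on exactly $2^{d}$.

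The cleanest route to the $2^{d}$ bound is a boundary-counting argument color by color. For each color $c\in\{0,1,\dots,d\}$ and each facet $F\in\Omega$, deleting the color-$c$ vertex of $F$ yields a codimension-one face (a ``ridge''); since $z$ is a cycle over $\Z_2$, every ridge lies in an even number of facets of $\Omega$, hence in at least two. I would argue that projecting $\Omega$ to the two colors $\{c,c'\}$, or iterating the even-degree condition across all $d+1$ color-deletions, forces the facet set to refine into a structure of product type whose size is bounded below by $2^{d}$. Concretely, I expect to show that the number of distinct ``color patterns'' (choices of vertex in each color class occurring in $\Omega$) multiplies by a factor of at least $2$ for each color, which is exactly where the balanced hypothesis does its work, each of the $d+1$ colors contributing independently.

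The main obstacle I anticipate is making this doubling rigorous rather than heuristic: the even-degree condition on ridges only immediately gives that each color class used in $\Omega$ meets $\Omega$ in at least two vertices, which yields a bound like $|\Omega|\ge 2(d+1)$ by a naive count, \emph{not} the exponential $2^{d}$. Bridging the gap from the additive $2(d+1)$ to the multiplicative $2^{d}$ requires genuinely exploiting the cycle condition in all $d+1$ directions simultaneously, most likely via an inductive argument on links: the link of a vertex in a balanced $d$-complex carrying non-zero homology should itself carry a non-bounding $(d-1)$-cycle in a balanced $(d-1)$-complex, giving $|\Omega|\ge 2\cdot 2^{d-1}=2^{d}$ once the base case $d=1$ (bipartite girth $\ge 4$, but here measuring a non-contractible cycle relative to homology) is pinned down. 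Verifying that passing to a link preserves both balancedness and non-triviality of homology, and that the doubling is not lost at the induction step, is the technical heart I would need to get right.
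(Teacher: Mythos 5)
Your overall strategy --- reduce both parts to a girth lower bound for balanced complexes and then invoke Theorems~\ref{thm1a} and~\ref{thm1b} --- is exactly the paper's approach. But you are aiming at the wrong constant, and this is not a cosmetic indexing issue: you need the girth of a balanced $d$-complex to be at least $2^{d+1}$, not $2^{d}$. Check the arithmetic: Theorem~\ref{thm1a} reads $\mathrm{g}(\Sigma)\left(\beta_d+1\right)\leq 2f_d(\Sigma)$, so a girth bound of $2^{d}$ only yields $2^{d-1}(\beta_d+1)\leq f_d$, which is strictly weaker than part~(a); likewise $\frac{\mathrm{g}}{\mathrm{g}-2}$ with $\mathrm{g}=2^{d}$ gives $\frac{2^{d-1}}{2^{d-1}-1}$, weaker than the $\frac{2^{d}}{2^{d}-1}$ of part~(b). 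Only $\mathrm{g}(\Sigma)\geq 2^{d+1}$ produces the stated inequalities (and $2^{d+1}$ is sharp: the boundary of the $(d+1)$-dimensional cross-polytope is balanced with exactly $2^{d+1}$ facets). Your own hesitation at the base case already signals the problem: for $d=1$ the relevant bound is the bipartite girth $4=2^{1+1}$, and you should let that propagate, rather than trying to force the answer down to $2^{d}$.

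The correct induction, which is what the paper sketches, goes as follows and shows where your proposed doubling $|\Omega|\geq 2\cdot 2^{d-1}$ undercounts. Pass first to the pure subcomplex $\Omega$ spanned by the support of a homologically non-trivial $d$-cycle; the $\mathbb{Z}_2$-cycle condition makes every ridge of $\Omega$ lie in an even (hence $\geq 2$) number of facets, so $\Omega$ contains a closed balanced $d$-pseudomanifold. Now fix a color class: every facet contains exactly one vertex of that color, and the closedness forces the color class to contain at least \emph{two} vertices (walk across a ridge missing that color). The link of each such vertex is again (after restricting to a suitable component) a closed balanced $(d-1)$-pseudomanifold, so by induction each such vertex lies in at least $2^{d}$ facets, and facets containing distinct vertices of one color class are distinct. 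Hence $f_d(\Omega)\geq 2\cdot 2^{d}=2^{d+1}$. Your version loses a factor of $2$ because the link of a vertex inherits the bound $2^{(d-1)+1}=2^{d}$ from the induction hypothesis, not $2^{d-1}$; the extra factor of $2$ then comes from the two vertices in the chosen color class, not from the link itself. With the target corrected to $2^{d+1}$ and this two-vertices-per-color observation added, your argument closes and coincides with the paper's.
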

\begin{proof}
It suffices to show that the girth of a balanced $d$-dimensional simplicial complex is at least $2^{d+1}$. The crucial point is that a balanced $d$-dimensional complex with non-zero top dimensional homology has at least $2^{d+1}$ faces of dimension $d$. To see this one can observe that such a complex must contain a balanced $d$-dimensional pseudomanifold without boundary; the pure complex whose facets are support of a $d$-cycle. The claim then can be proved easily for pseudomanifolds, say by induction on the dimension. We leave it to the reader to fill in the details. 
\end{proof}

Our method is applicable also to complexes that admit a codimension zero embedding. For this, we first need to prove an 
auxiliary result. 

\begin{lem}\label{BetaEf}
Let $\Sigma$ be a $d$-dimensional simplicial complex and 
let $\Sigma^{(-1)}$ denote its codimension one skeleton. Then one has
\[f_d(\Sigma) = \beta_d(\Sigma;\Z_2)-\beta_{d-1} (\Sigma;\Z_2)
+\beta_{d-1}(\Sigma^{(-1)};\Z_2).\]
\end{lem}
\begin{proof}

We have that $f_i(\Sigma^{(-1)})=f_i(\Sigma)$ for all $i\leq d-1$, 
and $\beta_i(\Sigma^{(-1)})=\beta_i(\Sigma)$
for all $i\leq d-2$. 
Hence, by the Euler-Poincar\'e formula

\[
 (-1)^{d}f_{d}(\Sigma) =\chi(\Sigma)-\chi(\Sigma^{(-1)}) =
(-1)^{d} \left(\beta_{d}(\Sigma)-\beta_{d-1}(\Sigma)
+\beta_{d-1}(\Sigma^{(-1)} \right)
\]

\end{proof}


\begin{cor}\label{dtod}
Let $\Sigma$ be a $d$-dimensional simplicial complex that admits a PL embedding into $\R^{d}$. Then $f_d(\Sigma)\leq \frac{2}{d+1} f_{d-1}
(\Sigma) -1$.
\end{cor}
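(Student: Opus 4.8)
The plan is to deduce Corollary~\ref{dtod} from Lemma~\ref{BetaEf} by passing from the codimension-zero embedding of $\Sigma$ in $\R^d$ to a codimension-one statement about its skeleton $\Sigma^{(-1)}$. The key observation is that if $\Sigma$ PL embeds into $\R^d$, then \emph{a fortiori} it PL embeds into $\R^{d+1}$, and so does the $(d-1)$-dimensional complex $\Sigma^{(-1)}$; but the more useful reformulation is that $\Sigma^{(-1)}$, being $(d-1)$-dimensional and PL embeddable into $\R^{(d-1)+1}=\R^d$, is exactly the kind of complex to which Corollary~\ref{cor1c} applies, one dimension down. So first I would invoke Corollary~\ref{cor1c} with $d$ replaced by $d-1$ and $\Sigma$ replaced by $\Sigma^{(-1)}$, yielding
\[
f_{d-1}(\Sigma^{(-1)}) \leq \frac{d+1}{d-1}\bigl(f_{d-2}(\Sigma^{(-1)})-\beta_{d-2}(\Sigma^{(-1)})-1\bigr).
\]

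Next I would translate this back to data of $\Sigma$ itself. Since $f_i(\Sigma^{(-1)})=f_i(\Sigma)$ for $i\le d-1$ and $\beta_i(\Sigma^{(-1)})=\beta_i(\Sigma)$ for $i\le d-2$ (these are precisely the identities recorded at the start of the proof of Lemma~\ref{BetaEf}), the right-hand side above is expressible entirely in terms of $\Sigma$, giving a bound on $f_{d-1}(\Sigma)$ in terms of $f_{d-2}(\Sigma)$ and $\beta_{d-2}(\Sigma)$. The remaining task is to connect this to $f_d(\Sigma)$. Here Lemma~\ref{BetaEf} is the bridge: it expresses $f_d(\Sigma)$ as $\beta_d(\Sigma)-\beta_{d-1}(\Sigma)+\beta_{d-1}(\Sigma^{(-1)})$. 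I would combine this with the Morse-type nonnegativity $\chi_{j-1}\ge 0$ and the inequality above, aiming to bound the alternating combination of lower face numbers by $\tfrac{2}{d+1}f_{d-1}(\Sigma)$.

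An alternative and likely cleaner route avoids routing through Corollary~\ref{cor1c} and instead applies Corollary~\ref{cor1c} directly to $\Sigma$ in dimension $d$ using the hypothesis $\Sigma\hookrightarrow\R^d\subset\R^{d+1}$, then substitutes the skeleton identity from Lemma~\ref{BetaEf}. Concretely, embeddability into $\R^d$ forces the top homology $\beta_d(\Sigma;\Z_2)$ to vanish (a $d$-complex in $\R^d$ cannot carry a nonzero $d$-cycle, since such a cycle would bound and there is no ``inside'' available in the ambient $\R^d$ of matching dimension). Feeding $\beta_d=0$ into Lemma~\ref{BetaEf} gives $f_d(\Sigma)=\beta_{d-1}(\Sigma^{(-1)})-\beta_{d-1}(\Sigma)$, and then the codimension-one bound of Corollary~\ref{cor1c}, applied to the embeddable skeleton, controls $\beta_{d-1}(\Sigma^{(-1)})$ in terms of $f_{d-1}$, producing the stated coefficient $\tfrac{2}{d+1}$ after simplification.

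The main obstacle I anticipate is pinning down the correct dimensional bookkeeping in the girth/coefficient factor: the $\tfrac{d+2}{d}$ of Corollary~\ref{cor1c} must become $\tfrac{2}{d+1}$ here, which means the corollary has to be applied in dimension $d-1$ (so that $\tfrac{(d-1)+2}{d-1}=\tfrac{d+1}{d-1}$) and then \emph{inverted} against the skeleton relation rather than used directly. Getting the indices and the direction of the inequality to line up so that the final constant is exactly $\tfrac{2}{d+1}$, and verifying that the subtracted $-1$ survives the manipulation, is the delicate step; everything else is the routine substitution of the skeleton face-number and Betti-number identities.
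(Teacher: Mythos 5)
Your second (``alternative'') route has the right architecture and is essentially the paper's argument: embeddability into $\R^d$ forces $\beta_d(\Sigma;\Z_2)=0$ (the paper gets this from Alexander duality), and then Lemma~\ref{BetaEf} gives $f_d(\Sigma)=\beta_{d-1}(\Sigma^{(-1)})-\beta_{d-1}(\Sigma)\leq\beta_{d-1}(\Sigma^{(-1)})$, so everything reduces to bounding $\beta_{d-1}(\Sigma^{(-1)})$ from above using $f_{d-1}$. But the tool you invoke for that last step is the wrong one. Corollary~\ref{cor1c} applied to the $(d-1)$-dimensional complex $\Sigma^{(-1)}$ (which PL embeds into $\R^d$) yields $f_{d-1}(\Sigma^{(-1)})\leq\frac{d+1}{d-1}\bigl(f_{d-2}-\beta_{d-2}-1\bigr)$; it bounds the \emph{top face number} of the skeleton by lower-dimensional data and contains no control whatsoever of $\beta_{d-1}(\Sigma^{(-1)})$ in terms of $f_{d-1}$. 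The inequality you actually need is Theorem~\ref{thm1a}, i.e.\ $\mathrm{g}(\Sigma^{(-1)})\bigl(\beta_{d-1}(\Sigma^{(-1)})+1\bigr)\leq 2f_{d-1}(\Sigma^{(-1)})$, which applies because $\Sigma^{(-1)}$ has a $2$-complete basis by Theorem~\ref{2fold}. Combined with $\mathrm{g}(\Sigma^{(-1)})\geq(d-1)+2=d+1$ and $f_{d-1}(\Sigma^{(-1)})=f_{d-1}(\Sigma)$, this gives $(d+1)(f_d(\Sigma)+1)\leq(d+1)\bigl(\beta_{d-1}(\Sigma^{(-1)})+1\bigr)\leq 2f_{d-1}(\Sigma)$, which is exactly the claimed bound including the $-1$.

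Your first route does not work as described: the inequality it produces is an \emph{upper} bound on $f_{d-1}(\Sigma^{(-1)})$ in terms of $f_{d-2}$ and $\beta_{d-2}$, whereas the target inequality needs $f_{d-1}$ on the right-hand side as an upper bound for $f_d$; an upper bound on $f_{d-1}$ points in the wrong direction and cannot be ``inverted'' against Lemma~\ref{BetaEf} to recover the statement. The fix is simply to replace both invocations of Corollary~\ref{cor1c} by Theorem~\ref{thm1a} applied to the codimension-one skeleton; with that substitution your alternative route coincides with the paper's proof.
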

\begin{proof}
It can easily be shown, say by using Alexander duality, that the top dimensional homology of $\Sigma$ must be zero. Thus, it follows from Lemma~\ref{BetaEf} that $\beta_{d-1}(\Sigma^{(-1)};\Z_2)\geq f_d(\Sigma)$. Now, applying Theorem~\ref{thm1a} to $\Sigma^{(-1)}$ we get
\[(d+1)(f_d(\Sigma)+1)\leq (d+1)(\beta_{d-1}(\Sigma^{(-1)};\Z_2)+1)\leq 2f_{d-1}(\Sigma^{(-1)})=2f_{d-1}(\Sigma).\]
\end{proof}

\section{Corollaries in Low Dimensions}
In this section we summarise direct consequences of the main results 
for embeddings into
dimensions $2$, $3$ and $4$. Throughout, the number of vertices of 
a simplicial complex is denoted by $n$ (rather than $f_0$).

\begin{prop}\label{2to2}
Let $\Sigma$ be a $2$-dimensional complex that PL embeds into $\R^2$. Then $f_2(\Sigma)\leq \frac{2}{3}f_1(\Sigma)-1$. In particular, $f_2(\Sigma)\leq 2n-5$.
\end{prop}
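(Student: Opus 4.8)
The plan is to derive the first inequality directly from Corollary~\ref{dtod} and then to feed in the classical edge bound for planar graphs in order to pass to a bound in terms of the number of vertices alone. Since $\Sigma$ PL embeds into $\R^2$, Corollary~\ref{dtod} applies in the case $d=2$ and yields at once
\[
f_2(\Sigma)\leq \tfrac{2}{3}f_1(\Sigma)-1,
\]
which is the first assertion. No additional argument is needed here; the whole content of the first inequality is already packaged in the earlier corollary.

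For the \emph{in particular} clause I would reduce $f_1(\Sigma)$ to $n=f_0(\Sigma)$ using planarity of the underlying graph. The key observation is that any PL embedding of $\Sigma$ into $\R^2$ restricts to a planar drawing of its $1$-skeleton $\Sigma^{(-1)}$, so the underlying graph is a simple planar graph on $n$ vertices. By Euler's bound---the $3n-6$ inequality quoted in the Background section, valid for $n\geq 3$---we obtain $f_1(\Sigma)\leq 3n-6$. Substituting this into the inequality above gives
\[
f_2(\Sigma)\leq \tfrac{2}{3}(3n-6)-1 = 2n-5,
\]
as claimed.

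The only delicate point, and hence the step I would flag, is the low-vertex degeneracy built into the $3n-6$ bound, which requires $n\geq 3$. This is harmless here: if $f_2(\Sigma)>0$ then $\Sigma$ contains a triangle and therefore automatically has $n\geq 3$, while if $f_2(\Sigma)=0$ both displayed inequalities hold trivially (for the second one needs only $n\geq 3$, and for $n<3$ there can be no $2$-face at all). Thus the argument reduces entirely to invoking Corollary~\ref{dtod} and the planar graph inequality, with no genuinely hard step; the main thing to verify carefully is simply that the restriction of the embedding to the $1$-skeleton is itself a bona fide planar embedding, which is immediate from the definition of PL embedding.
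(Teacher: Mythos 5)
Your argument is correct and is essentially identical to the paper's proof: the first inequality is the $d=2$ case of Corollary~\ref{dtod}, and the second follows by substituting the planar bound $f_1(\Sigma)\leq 3n-6$ for the $1$-skeleton. Your extra care about the $n\geq 3$ degeneracy is a reasonable addition but does not change the argument.
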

\begin{proof}
The first inequality follows from Corollary~\ref{dtod}. The second inequality follows from the fact that the underlying graph of $\Sigma$ is planar. 
\end{proof}

\begin{prop}\label{2to3}
Let $\Sigma$ be a connected $2$-dimensional complex that PL embeds into $\R^3$. Then $f_2(\Sigma)\leq 2(f_1(\Sigma)-\beta_1(\Sigma)-n)$. 
\end{prop}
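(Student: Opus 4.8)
The plan is to specialise the mechanism behind Theorem~\ref{thm1b} to the case $d=2$, but to retain the term $\delta_0$ that Corollary~\ref{cor1c} throws away; this is exactly what upgrades the constant $-1$ into $-n$. First I would invoke Theorem~\ref{2fold}: since $\Sigma$ PL embeds into $\R^3=\R^{d+1}$, the group $H_2(\Sigma;\Z_2)$ admits a $2$-complete basis, so Theorem~\ref{thm1a} applies and yields
\[
\mathrm{g}(\Sigma)\bigl(\beta_2(\Sigma)+1\bigr)\le 2f_2(\Sigma).
\]
Because $\Sigma$ is genuinely two-dimensional, its girth satisfies $\mathrm{g}(\Sigma)\ge d+2=4$, whence $\tfrac{\mathrm{g}(\Sigma)}{\mathrm{g}(\Sigma)-2}\le 2$.

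Next I would eliminate $\beta_2$ via the Euler--Poincar\'e relation used in the proof of Theorem~\ref{thm1b}, namely $\beta_2(\Sigma)=f_2(\Sigma)-\chi_1(\Sigma)$, observing that for $d=2$ the alternating sum terminates exactly, $\chi_1(\Sigma)=\delta_1-\delta_0$, with no truncation and hence no loss. Substituting and rearranging converts the displayed inequality into
\[
\bigl(\mathrm{g}(\Sigma)-2\bigr)f_2(\Sigma)\le \mathrm{g}(\Sigma)\bigl(\chi_1(\Sigma)-1\bigr).
\]
It is this decision not to discard $\chi_0=\delta_0$ (as the odd-$k$ form of Theorem~\ref{thm1b} would) that sharpens the bound relative to Corollary~\ref{cor1c}.

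To conclude I would verify the positivity that licenses passing from the multiplier $\tfrac{\mathrm{g}}{\mathrm{g}-2}$ to $2$. Since $\Sigma$ is two-dimensional we have $f_2(\Sigma)\ge 1$, so the left-hand side above is at least $\mathrm{g}(\Sigma)-2>0$; this forces $\chi_1(\Sigma)-1\ge 0$ (equivalently, $\chi_1(\Sigma)=\operatorname{rank}\partial_2\ge 1$, as a single triangle already has nonzero boundary over $\Z_2$). With $\chi_1(\Sigma)-1\ge 0$ and $\tfrac{\mathrm{g}(\Sigma)}{\mathrm{g}(\Sigma)-2}\le 2$ I obtain $f_2(\Sigma)\le 2\bigl(\chi_1(\Sigma)-1\bigr)$. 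Finally I would unwind the definitions, using connectedness in the form $\beta_0(\Sigma)=1$ so that $\delta_0=f_0-\beta_0=n-1$, giving $2(\chi_1(\Sigma)-1)=2(\delta_1-\delta_0-1)=2\bigl(f_1(\Sigma)-\beta_1(\Sigma)-n\bigr)$, the claimed inequality. The only point requiring care — rather than a genuine obstacle — is the sign bookkeeping: the inequality $\tfrac{\mathrm{g}}{\mathrm{g}-2}(\chi_1-1)\le 2(\chi_1-1)$ holds only once $\chi_1-1$ is known to be nonnegative, and connectedness is needed precisely to pin down $\beta_0=1$ and thereby turn the retained $\delta_0$ term into the vertex count $n$.
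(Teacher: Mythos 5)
Your proposal is correct and follows the paper's own route: the paper simply derives the proposition from Theorem~\ref{thm1b}, and you carry out exactly that derivation for $d=2$, correctly noting that the alternating sum terminates at $\delta_0$ with no truncation and that connectedness gives $\delta_0=n-1$. Your extra care about the sign of $\chi_1-1$ (needed to replace $\tfrac{\mathrm{g}}{\mathrm{g}-2}$ by $2$) is a legitimate detail the paper leaves implicit, and you resolve it correctly via $\chi_1=\operatorname{rank}\partial_2\ge 1$.
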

\begin{proof}
This follows easily from Theorem~\ref{thm1b}.
\end{proof}
\begin{cor}[Dey-Edelsbrunner~\cite{DE94}]\label{de}
Let $\Sigma$ be a $2$-dimensional complex that PL embeds into $\R^3$. Then $f_2(\Sigma)\leq n(n-3)$. 
\end{cor}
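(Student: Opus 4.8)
The plan is to combine the bound from Proposition \ref{2to3} with an upper estimate on $\beta_1(\Sigma)$ in terms of $n$, so that the right-hand side becomes a function of $n$ alone. From Proposition \ref{2to3} we have $f_2(\Sigma)\leq 2(f_1(\Sigma)-\beta_1(\Sigma)-n)$ for connected $\Sigma$. Observe that $f_1(\Sigma)-\beta_1(\Sigma)$ is exactly $\delta_1-\beta_0$ up to constants; more usefully, the quantity $f_1(\Sigma)-\beta_1(\Sigma)$ equals $f_1(\Sigma^{(0)})-\beta_1(\Sigma^{(0)})$ where $\Sigma^{(0)}$ is the $1$-skeleton (a graph $G$), since passing to the $1$-skeleton changes neither $f_1$ nor $\beta_1$. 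For a connected graph $G$ on $n$ vertices we have the identity $f_1(G)-\beta_1(G)=n-1$, which is just rank-nullity (the cycle space has dimension $f_1-n+\beta_0=f_1-n+1$). This would make the bound trivially linear, so the first thing I would do is double-check against Proposition \ref{2to3}: plugging in $f_1-\beta_1=n-1$ yields $f_2\leq 2(n-1-n)=-2$, which is absurd. Hence the targeted quadratic bound $n(n-3)$ must come from a different route, and I should not reduce $\beta_1$ away.

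The resolution is that the relevant cycle-space dimension is \emph{not} that of the full $1$-skeleton but only counts the $1$-cycles that survive in $H_1(\Sigma)$ after the $2$-faces are attached, so $f_1(\Sigma)-\beta_1(\Sigma)-n$ is genuinely controlled by the combinatorics of how $2$-faces can sit on a graph. The correct approach is to bound $f_1(\Sigma)$ itself. Since $\Sigma$ PL embeds into $\R^3$, each $2$-face is a triangle on three of the $n$ vertices, and I would argue that the $1$-skeleton is just a graph on $n$ vertices, so crudely $f_1(\Sigma)\leq \binom{n}{2}$. Substituting into Proposition \ref{2to3} and using $\beta_1(\Sigma)\geq 0$ gives
\begin{equation}\label{eq:decor}
f_2(\Sigma)\leq 2(f_1(\Sigma)-n)\leq 2\binom{n}{2}-2n = n(n-1)-2n = n(n-3).
\end{equation}
This is exactly the Dey--Edelsbrunner bound, and the computation in \eqref{eq:decor} is the whole content once the two ingredients are in place.

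The key steps, in order, are: first reduce to the connected case so that Proposition \ref{2to3} applies (if $\Sigma$ is disconnected, apply the bound componentwise and check that the sum of the component-wise bounds is dominated by $n(n-3)$, which needs the superadditivity of $m\mapsto m(m-3)$ for $m\geq 2$ together with care about isolated vertices and small components); second, invoke $f_1(\Sigma)\leq\binom{n}{2}$ and $\beta_1(\Sigma)\geq 0$; third, carry out the arithmetic in \eqref{eq:decor}. The main obstacle I anticipate is the disconnected case: Proposition \ref{2to3} is stated only for connected complexes, and summing $\sum_j n_j(n_j-3)$ over components with $\sum_j n_j=n$ does not obviously stay below $n(n-3)$ when there are components with fewer than $3$ vertices (an isolated vertex contributes $n_j(n_j-3)=-2<0$, which helps, but components with $n_j=2$ or degenerate cases require checking the inequality $\sum n_j(n_j-3)\leq n(n-3)$ directly). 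I would handle this by noting that discarding a component strictly decreases $f_2$ while the right-hand side $n(n-3)$ only grows with $n$, so it suffices to prove the bound for a single connected complex and then observe that the global bound follows by comparing to the largest component. This monotonicity argument is where the care lies, but it is routine rather than deep.
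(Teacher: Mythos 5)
Your argument is correct and follows essentially the paper's own proof: reduce to the connected case, apply Proposition~\ref{2to3}, discard $\beta_1(\Sigma)\geq 0$, and use the trivial bound $f_1(\Sigma)\leq\binom{n}{2}$. One small caveat on your handling of disconnectedness: the ``compare to the largest component'' shortcut does not control the sum $\sum_j f_2(\Sigma_j)$, but your superadditivity observation ($\sum_j n_j^2\leq(\sum_j n_j)^2$, with small components only helping) does the job, so nothing essential is lost.
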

\begin{proof}
Without loss of generality, we may assume that $\Sigma$ is connected. 
The inequality is an immediate consequence of Proposition~\ref{2to3} 
and the trivial 
$\binom{n}{2}$ upper bound for $f_1(\Sigma)$. 
\end{proof}

\begin{cor}
Let $\Sigma$ be a $3$-dimensional complex that PL embeds into $\R^3$. Then $f_3(\Sigma)\leq n(n-3)/2-1$. 
\end{cor}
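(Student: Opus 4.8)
The plan is to chain together the two codimension-related bounds already established for dimension $3$ and dimension $2$. First I would invoke Corollary~\ref{dtod} with $d=3$. Since $\Sigma$ is a $3$-dimensional complex PL embeddable into $\R^3$, that corollary gives
\[
f_3(\Sigma)\leq \frac{2}{d+1}f_{d-1}(\Sigma)-1=\frac{2}{4}f_2(\Sigma)-1=\frac{1}{2}f_2(\Sigma)-1.
\]
So everything reduces to controlling $f_2(\Sigma)$ in terms of the number of vertices $n$.

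The key observation I would make next is that the $2$-skeleton of $\Sigma$ (its codimension one skeleton $\Sigma^{(-1)}$) is itself a $2$-dimensional complex that PL embeds into $\R^3$, being a subcomplex of $\Sigma$, and moreover it has exactly the same $2$-dimensional faces, so $f_2(\Sigma^{(-1)})=f_2(\Sigma)$. This lets me apply the Dey--Edelsbrunner bound from Corollary~\ref{de} to $\Sigma^{(-1)}$, yielding
\[
f_2(\Sigma)=f_2(\Sigma^{(-1)})\leq n(n-3).
\]

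Combining the two displays gives
\[
f_3(\Sigma)\leq \frac{1}{2}f_2(\Sigma)-1\leq \frac{1}{2}\,n(n-3)-1=\frac{n(n-3)}{2}-1,
\]
which is the claimed inequality. The pleasant feature here is that the factor $\tfrac{2}{d+1}=\tfrac12$ coming from the codimension zero estimate fits exactly against the quadratic codimension one estimate $n(n-3)$, so no slack is lost and the bound is obtained with equality in the arithmetic. I do not anticipate a genuine obstacle: the only thing to verify carefully is that the hypotheses of both corollaries are legitimately met by $\Sigma$ and by its $2$-skeleton respectively, and that passing to the $2$-skeleton preserves the count of top-dimensional faces of the lower estimate — both of which are immediate. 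The entire content of the argument is the recognition that these two previously proved inequalities compose cleanly.
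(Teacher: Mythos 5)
Your argument is exactly the paper's: the authors also deduce the bound by combining Corollary~\ref{dtod} (giving $f_3(\Sigma)\leq \tfrac12 f_2(\Sigma)-1$) with the Dey--Edelsbrunner bound of Corollary~\ref{de} applied to the $2$-skeleton, which embeds into $\R^3$ along with $\Sigma$. The proposal is correct and matches the intended proof.
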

\begin{proof}
This follows from Corollary~\ref{dtod} and Corollary~\ref{de}.
\end{proof}

\begin{prop}
Let $\Sigma$ be a connected balanced $2$-dimensional complex that embeds into $\R^3$. Then $f_2(\Sigma)\leq \frac{4}{9}(n^2-3n)$.
\end{prop}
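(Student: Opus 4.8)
The plan is to combine the balanced face-number machinery from Theorem~\ref{flag1} with a sharp counting bound on $f_1(\Sigma)$ for balanced planar-type complexes, and then feed everything into the codimension-one inequality of Proposition~\ref{2to3}. Concretely, for $d=2$ we have by Proposition~\ref{2to3} that
\begin{equation}\label{plan-start}
f_2(\Sigma)\leq 2\bigl(f_1(\Sigma)-\beta_1(\Sigma)-n\bigr)\leq 2\bigl(f_1(\Sigma)-n\bigr),
\end{equation}
so the whole problem reduces to proving an upper bound on the number of edges $f_1(\Sigma)$ of the underlying graph $G$ of $\Sigma$. First I would observe that since $\Sigma$ PL embeds into $\R^3$, its $1$-skeleton $G$ need \emph{not} be planar, so we cannot simply invoke $3n-6$. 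Instead, the balancedness hypothesis is the key resource: $G$ is $3$-colorable, meaning its vertex set splits as $V_1\sqcup V_2\sqcup V_3$ with all edges between distinct color classes, so $G$ is a subgraph of the complete tripartite graph $K_{n_1,n_2,n_3}$ where $n_1+n_2+n_3=n$.

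The main step, then, is to bound $f_1(\Sigma)$ by bounding the number of edges of a tripartite graph, and here I expect an optimization over the color-class sizes $n_1,n_2,n_3$ to produce the $\tfrac{4}{9}n^2$ shape. The crudest bound $f_1(\Sigma)\leq n_1n_2+n_1n_3+n_2n_3$ is maximized when the classes are balanced, $n_1=n_2=n_3=n/3$, giving $3\cdot(n/3)^2 = n^2/3$; substituting this into \eqref{plan-start} yields $f_2(\Sigma)\leq 2(n^2/3-n)=\tfrac{2}{3}(n^2-3n)$, which is off by a factor of $\tfrac{2}{3}$ from the stated $\tfrac{4}{9}(n^2-3n)$. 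This tells me that the pure tripartite edge count is too lossy, and the improvement must come from exploiting that $G$ is not merely tripartite but \emph{triangle-rich with a planar/spherical embedding constraint}: the $2$-faces of $\Sigma$ sit inside $\R^3$, and the relation between edges and facets is governed more tightly than the bipartite-at-a-time count allows.

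The cleaner route, and the one I would pursue, is to apply the girth-based Theorem~\ref{thm1a} (equivalently Theorem~\ref{flag1}) directly rather than routing through $f_1$. From Theorem~\ref{flag1}\eqref{item1f} with $d=2$ we get $4(\beta_2+1)\leq f_2$, but the more useful fact is that a balanced $2$-complex has girth at least $2^{3}=8$; combining this $\mathrm{g}(\Sigma)\geq 8$ with the balanced analogue of Corollary~\ref{cor1c} sharpens the $\tfrac{d+2}{d}=2$ coefficient. The plan is therefore: (i) record that $G\subseteq K_{n_1,n_2,n_3}$ and hence $f_1(\Sigma)\leq n_1n_2+n_1n_3+n_2n_3\leq \tfrac{1}{3}n^2$; (ii) feed this into Proposition~\ref{2to3} to control $f_2$ in terms of $n$; and (iii) apply the girth bound $\mathrm{g}\geq 8$ inside Theorem~\ref{thm1b} to recover the factor $\tfrac{\mathrm{g}}{\mathrm{g}-2}=\tfrac{8}{6}=\tfrac{4}{3}$ rather than the weaker $d=2$ generic factor, which supplies the missing $\tfrac{2}{3}$. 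The arithmetic then collapses to $f_2(\Sigma)\leq \tfrac{4}{3}\cdot\tfrac{1}{3}(n^2-3n)=\tfrac{4}{9}(n^2-3n)$.

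The hard part will be correctly combining the balanced girth improvement with the tripartite edge bound without double-counting the gains, and in particular verifying that the connectedness hypothesis and the $\chi_{d-1}$-style inequality line up so that the constant is exactly $\tfrac{4}{9}$ and not merely $O(n^2)$; I would isolate the optimization $\max(n_1n_2+n_1n_3+n_2n_3)$ subject to $\sum n_i=n$ as a clean lemma (the maximum $\tfrac{n^2}{3}$ at $n_i=n/3$ follows from concavity, or from $\sum_{i<j}n_in_j=\tfrac{1}{2}((\sum n_i)^2-\sum n_i^2)$ together with $\sum n_i^2\geq \tfrac{1}{3}(\sum n_i)^2$), and then track the girth factor separately so that the two estimates multiply transparently into the stated bound.
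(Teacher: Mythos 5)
Your final route is exactly the paper's: the paper invokes Theorem~\ref{flag1}(b) (which is precisely the girth bound $\mathrm{g}\geq 2^{d+1}=8$ fed into the machinery of Theorems~\ref{thm1a}--\ref{thm1b}) to get $f_2(\Sigma)\leq \frac{4}{3}(f_1(\Sigma)-n)$, and then applies the tripartite edge bound $f_1(\Sigma)\leq 3(n/3)^2$. Your initial detour through Proposition~\ref{2to3} is correctly recognized as too lossy, and the arithmetic in your step (iii) is right, so the proposal is correct and essentially identical to the paper's proof.
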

\begin{proof}
It follows from Theorem~\ref{flag1} that $f_2(\Sigma)\leq \frac{4}{3}(f_1(\Sigma)-n)$. Now, since the underlying graph of $\Sigma$ is $3$-colorable, one has $f_1(\Sigma)\leq 3(\frac{n}{3})^2$. The conclusion now follows easily. 
\end{proof}

For embeddings into dimension $4$ much less is known.
 It was conjectured by Kalai and Sarkaria (see Kalai's blog~\cite{CombinAndMore}, for instance) that if a $2$-dimensional complex is embedded into $\R^4$, then it has at most $2n(n-1)$ triangles. This conjecture is wide open. Currently, the best known bound~\cite{parsa} is $C\cdot n^{8/3}$, where $C$ is a constant. Here is what
 our method yields in the case of embeddings into 
 dimension four.
 
 \begin{prop}\label{3to4}
Let $\Sigma$ be a connected $3$-dimensional complex that PL embeds into $\R^4$. 
Then $f_3(\Sigma)\leq \frac{5}{3}\left( f_2(\Sigma)-f_1(\Sigma)
-\beta_2(\Sigma) +\beta_1(\Sigma)+n-2 \right)$. 
\end{prop}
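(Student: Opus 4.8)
\textbf{Proof proposal for Proposition~\ref{3to4}.}

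The plan is to specialize Theorem~\ref{thm1b} to the case $d=3$ with the odd value $k=3$, and then to eliminate the girth factor using the trivial girth bound $\mathrm{g}(\Sigma)\ge d+2 = 5$. For $d=3$ and $k=3$, the inner alternating sum in \eqref{eq3} runs through $\delta_{d-1}-\delta_{d-2}+\delta_{d-3}$, that is $\delta_2-\delta_1+\delta_0$, so Theorem~\ref{thm1b} gives
\[
f_3(\Sigma)\le \frac{\mathrm{g}(\Sigma)}{\mathrm{g}(\Sigma)-2}\bigl(\delta_2-\delta_1+\delta_0-1\bigr).
\]
The fraction $\tfrac{g}{g-2}$ is decreasing in $g$, so replacing $\mathrm{g}(\Sigma)$ by its lower bound $5$ yields the largest permissible coefficient $\tfrac{5}{3}$, exactly the constant appearing in the claimed inequality.

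Next I would unwind the definition $\delta_j = f_j(\Sigma)-\beta_j(\Sigma;\Z_2)$ and verify that the remaining alternating quantity reduces to the stated expression. Writing out $\delta_2-\delta_1+\delta_0$ gives
\[
\bigl(f_2-\beta_2\bigr)-\bigl(f_1-\beta_1\bigr)+\bigl(f_0-\beta_0\bigr)
= f_2-f_1+f_0-\beta_2+\beta_1-\beta_0.
\]
Here connectedness enters: for a connected complex $\beta_0(\Sigma;\Z_2)=1$, and $f_0(\Sigma)=n$. Substituting these, the bracketed term $\delta_2-\delta_1+\delta_0-1$ becomes
\[
f_2-f_1+n-\beta_2+\beta_1-1-1 = f_2-f_1-\beta_2+\beta_1+n-2,
\]
which matches precisely the parenthesized expression in the proposition, so the inequality follows.

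The only genuine point requiring care is that $k=3$ is a legitimate choice in Theorem~\ref{thm1b}: the theorem is stated for all odd $k\ge 1$, and $k=3$ is odd, so this is unproblematic, though one should check that the sum does not run past $\delta_0$ into negative-index face numbers. Since $d-k = 3-3 = 0\ge 0$, the last term is $\delta_0$ and no undefined $\delta_{-1}$ appears, so the hypotheses are met. I do not expect any serious obstacle here: the argument is a direct specialization followed by the substitution of $\beta_0=1$ and $f_0=n$ for a connected complex, together with the monotonicity observation that justifies plugging the worst-case girth $g=5$ into the coefficient.
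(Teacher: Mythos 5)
Your proposal is correct and is exactly the paper's intended argument: the paper's proof is the single line ``This follows from Theorem~\ref{thm1b}'', and your specialization to $d=3$, $k=3$, together with $\beta_0=1$, $f_0=n$, $\mathrm{g}(\Sigma)\ge 5$, and the monotonicity of $g/(g-2)$, fills in precisely the omitted computation. (The only micro-point you leave implicit is that the bracketed quantity is nonnegative, which is forced by $f_3\ge 0$ and the theorem itself, so the coefficient substitution is valid.)
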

\begin{proof}
This follows from Theorem~\ref{thm1b}.
\end{proof}
\begin{cor}
Let $\Sigma$ be a connected $3$-dimensional complex that PL embeds into $\R^4$. 
Then, $$f_3(\Sigma)\leq \frac{5}{3}(f_2(\Sigma) + \beta_{1}(\Sigma) -1) 
\, \mbox{ and } \;
f_3(\Sigma)\leq \frac{5}{3} \left({n\choose3}+ n -2) \right).$$
If $\Sigma$ is simply connected, then
$f_3(\Sigma)\leq \frac{5}{3}(f_2(\Sigma)  -1) $.
\end{cor}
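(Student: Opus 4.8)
The plan is to take Proposition~\ref{3to4} as the starting point and, for each of the three asserted inequalities, to absorb the auxiliary terms on its right-hand side using only elementary bounds on face numbers and Betti numbers. Recall that Proposition~\ref{3to4} gives $f_3(\Sigma) \le \frac{5}{3}\bigl(f_2(\Sigma) - f_1(\Sigma) - \beta_2(\Sigma) + \beta_1(\Sigma) + n - 2\bigr)$, so everything reduces to comparing the bracketed quantity with the corresponding target expression.

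For the first inequality I would show that the bracket is at most $f_2(\Sigma) + \beta_1(\Sigma) - 1$. After cancelling the common terms $f_2$ and $\beta_1$, this amounts to $n - 1 \le f_1(\Sigma) + \beta_2(\Sigma)$. Since $\Sigma$ is connected its $1$-skeleton contains a spanning tree, whence $f_1(\Sigma) \ge n - 1$; combined with $\beta_2(\Sigma) \ge 0$ this gives the claim. The third (simply connected) inequality then follows immediately: simple connectivity forces $H_1(\Sigma;\Z_2) = 0$, hence $\beta_1(\Sigma) = 0$, and substituting this into the first inequality yields $f_3(\Sigma) \le \frac{5}{3}(f_2(\Sigma) - 1)$.

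For the second inequality I would return to Proposition~\ref{3to4} directly rather than to the first inequality, since the derivation of the latter discarded the $f_1$ term that we now need. Here the goal is $f_2(\Sigma) - f_1(\Sigma) - \beta_2(\Sigma) + \beta_1(\Sigma) \le \binom{n}{3}$. This follows by bounding the three positive contributions separately: there are at most $\binom{n}{3}$ triangles, so $f_2(\Sigma) \le \binom{n}{3}$; one always has $\beta_1(\Sigma) \le f_1(\Sigma)$ (the first Betti number cannot exceed the number of $1$-chains), so $-f_1 + \beta_1 \le 0$; and $\beta_2(\Sigma) \ge 0$. Adding these yields the bound.

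None of the three steps presents a genuine obstacle; the only point requiring care is recognising that the second inequality should be deduced from Proposition~\ref{3to4} rather than from the already-proved first inequality, because the passage to the first inequality used connectivity to eliminate $f_1$, which would otherwise be available (and indeed necessary) to counterbalance $\beta_1$ in the second estimate. Everything else is bookkeeping with the trivial bounds $f_1 \ge n-1$, $f_2 \le \binom{n}{3}$, $\beta_1 \le f_1$, and $\beta_2 \ge 0$, together with the vanishing of $\beta_1$ under simple connectivity.
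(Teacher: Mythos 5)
Your proposal is correct and follows the same route as the paper, which simply cites Proposition~\ref{3to4} together with the trivial $\binom{n}{3}$ bound on $f_2$ and leaves the bookkeeping (namely $f_1\ge n-1$ from connectivity, $\beta_1\le f_1$, $\beta_2\ge 0$, and $\beta_1=0$ under simple connectivity) implicit. You have merely made those elementary estimates explicit, and each one checks out.
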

\begin{proof}
The inequalities are immediate consequences of Proposition~\ref{3to4} 
and the trivial ${n\choose 3}$ upper bound for $f_2(\Sigma)$. 
\end{proof}




\section{Estimates} 

In the following we give an upper bound for the number of top dimensional faces of a $d$-dimensional simplicial complex embedded into $\R^{d+1}$ in terms of the number of its vertices. Let us begin by observing that for a $d$-complex $\Sigma$ on $n$ vertices one has $f_{d-1}(\Sigma)\leq {n \choose d}$. Hence, it follows from Theorem~\ref{thm1b} that $f_d(\Sigma)<
(1+\frac{2}{d}){n \choose d}$. Therefore, we can easily obtain the upper bound $f_d(\Sigma)=\mathcal{O}(n^d)$ due to Dey and Pach~\cite[Theorem 3.1]{DeyPach}, where $\mathcal{O}$ is the big O notation. 

Below we present a slightly better upper bound by using our Theorem~\ref{thm1b} and a combination of an idea due to Gundert~\cite{Anna} and Sperner's Lemma~\cite[Lemma 4.5]{GK}. Recall that Sperner's Lemma asserts that for a simplicial complex $\Sigma$ on $n$ vertices the quantity $f_{i}(\Sigma)/f_{i-1}(\Sigma)$ is at most ${n\choose i+1}/{n\choose i}$. Notice that Sperner's Lemma can easily 
be strengthened to 
\[
f_i(\Sigma)/f_j(\Sigma)\leq {n\choose i+1}/{n \choose j+1}=\mathcal{O}(n^{i-j}),
\]
for all $i>j$.

\begin{thm}\label{thm11}
Let $\Sigma$ be a $d$-dimensional simplicial complex that admits a PL embedding into $\R^{d+1}$. Then $f_d(\Sigma)=\mathcal{O}(n^{d-\epsilon})$, where $\epsilon=3^{-\lceil\frac{d+1}{2} \rceil}$. 
\end{thm}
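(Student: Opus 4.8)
The plan is to bootstrap the polynomial bound $f_d(\Sigma) = \mathcal{O}(n^d)$ coming from Theorem~\ref{thm1b} into a bound with a genuine savings $n^{-\epsilon}$ in the exponent, and to do this by induction on the dimension $d$, using the combinatorial structure of the $2$-complete basis guaranteed by Theorem~\ref{2fold}. First I would recall that Theorem~\ref{thm1b} with $k=1$ (equivalently Corollary~\ref{cor1c}) gives $f_d(\Sigma) = \mathcal{O}(f_{d-1}(\Sigma))$, so the whole game is to show that $f_{d-1}(\Sigma)$ cannot be as large as ${n\choose d}$ once the codimension-one embedding constraint is in force. The key leverage is that the $(d-1)$-skeleton of $\Sigma$ is \emph{itself} a complex on which one can attempt to apply the inductive hypothesis, via the ratio form of Sperner's Lemma, $f_i(\Sigma)/f_j(\Sigma) \le {n\choose i+1}/{n\choose j+1} = \mathcal{O}(n^{i-j})$.

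The core combinatorial step, which I expect to be due to Gundert's idea, is a dichotomy argument. Consider the links of vertices (or of low-dimensional faces) in $\Sigma$. Either a large fraction of the top-dimensional faces are concentrated in links that are themselves ``small'' in a controlled sense, or the faces are spread out so that one can exploit the girth/homology obstruction globally. More concretely, I would fix a codimension $c = \lceil \frac{d+1}{2}\rceil$ and look at the complex obtained by taking links of $(d-c)$-dimensional faces: each such link is a $(c-1)$-dimensional complex that again PL-embeds into codimension one (links of embedded complexes embed), so the inductive hypothesis applies to it with its own savings $\epsilon' = 3^{-\lceil c/2\rceil}$. Summing the contributions of the links and converting back via Sperner's ratio inequality should produce the recursion $\epsilon_d = \epsilon_{d-c}/3$ or similar, which is exactly what yields the closed form $\epsilon = 3^{-\lceil (d+1)/2\rceil}$ upon unfolding.

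The step I expect to be the main obstacle is making the double-counting between a face and the links containing it precise enough to avoid losing the savings. When one sums $f_{\text{top}}(\mathrm{lk}\,\tau)$ over all faces $\tau$ of a fixed dimension, each top face of $\Sigma$ is counted with a multiplicity that is itself a binomial coefficient, and one must verify that dividing out this multiplicity, combined with the $\mathcal{O}(n^{c-\epsilon'})$ bound on each link and the $\mathcal{O}(n^{d-c})$ Sperner factor counting the links, genuinely multiplies to $\mathcal{O}(n^{d-\epsilon})$ rather than washing the improvement out. The halving of the dimension in the exponent $\lceil (d+1)/2\rceil$ strongly suggests that the right move is to take links of faces of dimension roughly $d/2$, so that the link and the ``ambient counting'' contribute comparable powers of $n$ and the improvement is balanced optimally between them.

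The plan is therefore: (i) state the induction hypothesis $f_j(\Gamma) = \mathcal{O}(n^{j - 3^{-\lceil (j+1)/2\rceil}})$ for all embeddable complexes $\Gamma$ of dimension $j < d$; (ii) reduce via Corollary~\ref{cor1c} to bounding $f_{d-1}(\Sigma)$; (iii) pass to links of faces of the appropriate middle dimension, invoke that links of PL codimension-one embeddings remain PL codimension-one embeddings, apply the inductive bound to each link; (iv) reassemble using the strengthened Sperner ratio inequality and verify the exponent arithmetic closes to $3^{-\lceil (d+1)/2\rceil}$. The base case $d=1$ (planar graphs, $f_1 = \mathcal{O}(n) = \mathcal{O}(n^{1-1/3})$ trivially) anchors the recursion.
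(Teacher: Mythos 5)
Your outer scaffolding coincides with the paper's: the reduction $f_d(\Sigma)=\mathcal{O}(f_{d-1}(\Sigma))$ via Theorem~\ref{thm1b}, the strengthened Sperner ratio inequality to transport a bound on a middle-dimensional face number up to $f_{d-1}$, and the choice of the middle dimension $\ell=\lceil\frac{d+1}{2}\rceil$ are all exactly the right moves. But the heart of the matter --- where the savings $n^{-\epsilon}$ actually comes from --- is not what you propose. The paper does not generate the savings by an induction inside the codimension-one world; it takes the $\ell$-skeleton $\Delta$ of $\Sigma$ and observes that $\Delta$ is an $\ell$-dimensional complex embeddable in $\R^{d+1}\subseteq\R^{2\ell}$, i.e.\ it lands in the \emph{double-dimension} regime, where Gundert's result \cite[Proposition 3.3.5]{Anna} (which rests on van Kampen obstruction and crossing-number machinery in the style of Dey--Pach, external to this paper) gives $f_\ell(\Delta)=\mathcal{O}(n^{\ell+1-3^{-\ell}})$. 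That single citation is the sole source of the exponent $3^{-\ell}$; the factor of $3$ has nothing to do with links, girth, or $2$-complete bases.

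Your link-based induction cannot be repaired as described, for two concrete reasons. First, your base case is false: for $d=1$ the claimed bound reads $f_1=\mathcal{O}(n^{1-1/3})=\mathcal{O}(n^{2/3})$, but planar graphs have up to $3n-6$ edges and $n\neq\mathcal{O}(n^{2/3})$; there is no nontrivial anchor at the bottom of a purely codimension-one induction. Second, carrying out the double count honestly --- $\binom{d+1}{c}\,f_d(\Sigma)=\sum_\tau f_{c-1}(\mathrm{lk}\,\tau)$ over all $(d-c)$-faces $\tau$, each link being a $(c-1)$-complex embedded in a $c$-sphere --- and inserting the inductive bound $f_{c-1}(\mathrm{lk}\,\tau)=\mathcal{O}(n^{c-1-\epsilon_{c-1}})$ together with the trivial $f_{d-c}(\Sigma)=\mathcal{O}(n^{d-c+1})$ yields only $\epsilon_d\geq\epsilon_{c-1}$: no division by $3$ appears anywhere, and the recursion telescopes down to the vacuous planar case, giving $\epsilon=0$. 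The recursion $\epsilon_d=\epsilon_{d-c}/3$ is reverse-engineered from the statement rather than produced by the counting. The missing idea is precisely the jump out of the codimension-one setting into the $\R^{2\ell}$ regime for the middle skeleton, where a genuinely different obstruction supplies the exponent gain.
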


\begin{proof}
Let us, to simplify notation, put $\ell=\lceil\frac{d+1}{2} \rceil$. Let $\Delta$ be the $\ell$-dimensional skeleton of $\Sigma$. Since $\Delta$ is embeddable into $\R^d$, it follows from~\cite[Proposition 3.3.5]{Anna} that 
\[
f_\ell(\Sigma)=f_\ell(\Delta)=\mathcal{O}(n^{\ell+1-3^{-\ell}}).
\]
Now, it follows from Sperner's Lemma that $f_{d-1}(\Sigma)= \mathcal{O}(n^{d-\ell-1})f_\ell(\Sigma)$. Therefore, one obtains that $f_{d-1}(\Sigma)= \mathcal{O}(n^{d-3^{-\ell}})$. Finally, the conclusion follows from Theorem~\ref{thm1b}. 
\end{proof}

We remark that the upper bound provided in Theorem~\ref{thm11} is probably far from the true upper bound. Actually, it was shown by Dey and Pach~\cite{DeyPach} that if a $k$-dimensional complex $\Sigma$ embeds into $\R^k$ then $f_k(\Sigma)=\mathcal{O}(n^{\lceil \frac{k}{2} \rceil})$. Indeed, for $k\geq 4$ it is an open problem to show that if a simplicial complex embeds into $\R^k$, then the total number of its faces is bounded above by $\mathcal{O}(n^{\lceil \frac{k}{2} \rceil})$.

\section{An upper bound by Gr\"{u}nbaum}

{\rm


In the 1970 paper
~\cite{Gru} Branko Gr\"{u}nbaum
shows that if a $d$-dimensional complex $\Sigma$ embeds into $\R^{d+1}$, then $f_d(\Sigma)\leq \frac{6}{d+1}f_{d-1}(\Sigma)$. He also proves
slightly sharper versions of this result for pure complexes, see Proposition~\ref{fG} below. 

How do the different bounds compare?
Due to their different structure it is hard to
make a general comparison. 
In view of having leading constant  $\frac{6}{d+1}$, 
it is clear that Gr\"unbaum's upper 
bound is better than ours in several cases, particularly
when one has only some partial $f$-vector information.
However, our bound is tighter in other cases, especially if much
structural  information, expressed in terms
of  $f$- and $\beta$-vectors, is available. In this section, we present one 
such case.

Let us begin with the following 
result, which extends the validity 
of Gr\"{u}nbaum's
inequality~\cite[5(iii)]{Gru} to embeddability into manifolds.


\begin{prop}\label{fG}
Let $\Sigma$ be a pure $d$-dimensional simplicial complex that is PL embeddable 
into a $(d+1)$-dimensional PL manifold. Then 
\begin{equation}\label{Geq}
f_d(\Sigma) \le \frac{6}{d+1} \  f_{d-1}(\Sigma) - \frac{10}{d(d+1)} \  f_{d-2} (\Sigma).
\end{equation} 
\end{prop}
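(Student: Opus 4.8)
The plan is to localise the embedding at codimension-three faces and exploit planarity of their links. First I would fix a $(d-2)$-dimensional face $\tau$ of $\Sigma$ and examine its link $\mathrm{lk}_\Sigma(\tau)$, which is an abstract simple graph: its vertices are the $(d-1)$-faces containing $\tau$, and its edges are the $d$-faces containing $\tau$ (a $d$-face $\sigma\supset\tau$ has $\sigma\setminus\tau$ a pair of vertices, giving a well-defined edge, and distinct $d$-faces give distinct pairs, so the graph is genuinely simple). Writing $s(\tau)=|\{\rho:\tau\subset\rho,\ \dim\rho=d-1\}|$ and $t(\tau)=|\{\sigma:\tau\subset\sigma,\ \dim\sigma=d\}|$, these are exactly the numbers of vertices and edges of $\mathrm{lk}_\Sigma(\tau)$.

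The geometric heart of the argument is that this graph is planar. Since $\Sigma$ PL embeds in the $(d+1)$-dimensional PL manifold $M$, I would take a point $p$ in the relative interior of $\|\tau\|$ together with a small $3$-dimensional disk $D$ in $M$ meeting $\|\tau\|$ transversally at $p$; its boundary $\partial D$ is a PL $2$-sphere linking $\tau$ (the linking sphere has dimension $(d+1)-(d-2)-1=2$). The intersection $\|\Sigma\|\cap D$ is a cone on $\mathrm{lk}_\Sigma(\tau)$ with apex $p$, so $\|\Sigma\|\cap\partial D$ is an embedded copy of $\mathrm{lk}_\Sigma(\tau)$ inside $\partial D\cong S^2$, whence the link is planar. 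This is the step I expect to require the most care: one must justify, via standard PL regular-neighbourhood theory, that the transverse linking sphere is a genuine $S^2$ and that $\Sigma$ meets it in a faithful copy of the link. This is precisely where the hypothesis of PL embeddability into a PL manifold (rather than merely $\R^{d+1}$) is used.

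Granting planarity, I would invoke Euler's bound $e\le 3v-6$ for simple planar graphs with $v\ge 3$, giving $t(\tau)\le 3s(\tau)-6$ whenever $s(\tau)\ge 3$. The boundary cases must then be handled using purity: every $(d-2)$-face of a pure complex lies in some $d$-face, which already forces $s(\tau)\ge 2$ (and excludes $s(\tau)\le 1$); and when $s(\tau)=2$ the link has at most one edge, so $t(\tau)\le 1=3s(\tau)-5$. Hence the uniform inequality $t(\tau)\le 3s(\tau)-5$ holds for \emph{every} $(d-2)$-face $\tau$. The slightly weaker constant $-5$ in place of $-6$ is exactly what produces the coefficient $10$ rather than $12$ in the final bound, so it is essential not to discard the $s(\tau)=2$ case.

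Finally I would sum this inequality over all $(d-2)$-faces and use the double-counting identities $\sum_\tau t(\tau)=\binom{d+1}{2}f_d(\Sigma)$, $\sum_\tau s(\tau)=d\,f_{d-1}(\Sigma)$, and $|\{\tau\}|=f_{d-2}(\Sigma)$, which follow by counting incident pairs (each $d$-simplex has $\binom{d+1}{2}$ sub-$(d-2)$-faces and each $(d-1)$-simplex has $\binom{d}{1}=d$ of them). This yields $\tfrac{d(d+1)}{2}f_d\le 3d\,f_{d-1}-5f_{d-2}$, and dividing through by $\tfrac{d(d+1)}{2}$ gives inequality~\eqref{Geq}. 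The only genuinely delicate point is the PL-topological justification of planarity of the links; the remaining steps are elementary counting.
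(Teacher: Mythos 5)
Your combinatorial skeleton is sound and genuinely different from the paper's: you do a single double count over $(d-2)$-faces using planarity of their links, whereas the paper inducts on dimension via vertex links (base case: Euler's bound $f_1\le 3f_0-5$ for planar graphs with an edge; inductive step: sum the inequality for the vertex links $L_v$, each PL embeddable in a $d$-sphere, using $\sum_v f_i(L_v)=(i+2)f_{i+1}(\Sigma)$). Your identities $\sum_\tau t(\tau)=\binom{d+1}{2}f_d$, $\sum_\tau s(\tau)=d\,f_{d-1}$ are correct, your handling of the boundary case $s(\tau)=2$ via purity is exactly the right way to see why the constant is $10$ rather than $12$ (the paper makes the same point in a footnote about needing $-5$ rather than $-6$), and granting planarity of the links the inequality follows.

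The genuine gap is in the transversality argument. The embedding of $\mathrm{st}_\Sigma(\tau)$ in the manifold $M$ need not be a local product along $\|\tau\|$: the two-dimensional pieces $\|\tau\ast e\|$, $e$ an edge of $G=\mathrm{lk}_\Sigma(\tau)$, can twist around $\|\tau\|$, so a small $3$-disk $D$ transverse to $\|\tau\|$ at an interior point $p$ is not guaranteed to meet $\|\Sigma\|$ in a cone on $G$; PL general position only bounds $\dim(\|\Sigma\|\cap D)$ by $2$. What regular-neighbourhood theory gives you canonically is the link pair at $p$, namely a PL embedding of $\mathrm{lk}(p,\|\Sigma\|)\cong \partial\tau\ast G\cong S^{d-3}\ast G$ into $\mathrm{lk}(p,M)\cong S^{d}$, and deducing $G\hookrightarrow S^2$ from an embedding of the join is not automatic (the embedding need not respect the join structure $S^d\cong S^{d-3}\ast S^2$, even after unknotting $\partial\tau$). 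The claim that $\mathrm{lk}_\Sigma(\tau)$ is planar is nevertheless true, but the clean proof is to iterate vertex links: writing $\tau=\{v_1,\dots,v_{d-1}\}$, one has $\mathrm{lk}_\Sigma(\tau)=\mathrm{lk}_{\mathrm{lk}_\Sigma(v_1)}(\tau\setminus v_1)$, and $\mathrm{lk}_\Sigma(v_1)$ PL embeds in the link of $v_1$ in $M$, a PL $d$-sphere; descending $d-1$ times lands $\mathrm{lk}_\Sigma(\tau)$ in a $2$-sphere. That is essentially the induction the paper runs, and once you are running it you may as well carry the face-number inequality through it directly, as the paper does. So either adopt the paper's induction outright, or keep your double count but replace the transverse-disk paragraph with the iterated-link argument for planarity.
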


\begin{proof}
We know that if $\Sigma$ is a planar graph which contains at least one edge, then\footnote{Note that "$-5$" is needed here, instead of "$-6$",
in order to include the case when $f_0=2$ for the inductive argument.} 
 $f_1(\Sigma)\leq 3f_0(\Sigma)-5$. This verifies the first step $d=1$ of an
 inductive argument.

Now assume that the statement is valid for every $1\leq k<d$ and $\Sigma$ is a pure $d$-dimensional simplicial complex that is PL embeddable into a $(d+1)$-dimensional
PL manifold. Let $V$ denote the vertex set of $\Sigma$ and for $v\in V$, let $L_v$ be the link of $v$ in $\Sigma$. Since $\Sigma$ is embeddable into a $(d+1)$-manifold, $L_v$ must be embeddable into a $d$-sphere and we have 
\[d!f_{d-1}(L_v)\leq 6(d-1)!f_{d-2}(L_v)-10(d-2)!f_{d-3}(L_v).\]
Summing over all vertices $v\in V$ and using the equation $\sum_vf_i(L_v)=(i+2)f_{i+1}(\Sigma)$ yields the desired conclusion. 
\end{proof}

Say we are interested in the question whether the $d$-skeleton
of a $(d+1)$-manifold is embeddable into $\R^{d+1}$. 
If the manifold in question has non-vanishing homology in dimension $d$
(or equivalently in dimension one) our
inequalities turn out to be sharp enough to
provide a negative answer, while 
Gr\"{u}nbaum's inequality \eqref{Geq} is not. 

\begin{prop}
Let $\Sigma$ be the $d$-skeleton
of a triangulated $(d+1)$-dimensional PL manifold with non-zero $d$-dimensional Betti number.
Then
$\Sigma$ is not PL-embeddable into $\R^{d+1}$. 
\end{prop}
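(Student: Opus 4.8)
The plan is to derive a contradiction from the assumption that $\Sigma$, the $d$-skeleton of a triangulated $(d+1)$-manifold $M$ with $\beta_d(M;\Z_2) \neq 0$, is PL-embeddable into $\R^{d+1}$. The key observation is that passing to the $d$-skeleton does not destroy $d$-dimensional homology, so $\beta_d(\Sigma;\Z_2) = \beta_d(M;\Z_2) \neq 0$; indeed $H_d$ of a complex depends only on $d$- and $(d-1)$-chains, and $\Sigma$ agrees with $M$ through dimension $d$. Thus the top Betti number of $\Sigma$ is nonzero, which means the girth $\mathrm{g}(\Sigma)$ is finite and $\mathrm{g}(\Sigma) \ge d+2 > 2$.

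First I would invoke Theorem~\ref{2fold}: since $\Sigma$ is assumed to PL-embed into $\R^{d+1}$, the group $H_d(\Sigma;\Z_2)$ admits a $2$-complete basis, and so Theorem~\ref{thm1a} applies, giving the inequality $\mathrm{g}(\Sigma)(\beta_d(\Sigma;\Z_2)+1) \le 2 f_d(\Sigma)$. The strategy is to show this is violated by computing $f_d(\Sigma)$ exactly and comparing. The natural tool is Lemma~\ref{BetaEf}, which relates $f_d$ to the Betti numbers of $\Sigma$ and its codimension-one skeleton $\Sigma^{(-1)}$. The crucial simplification comes from the fact that $\Sigma$ is the full $d$-skeleton of a manifold, so its $(d-1)$-faces each lie in a controlled number of $d$-faces; for a closed $(d+1)$-manifold, every $d$-face of $M$ is contained in exactly two $(d+1)$-faces, and this forces strong regularity on how $d$-faces meet along $(d-1)$-faces.

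The main obstacle, and the heart of the argument, will be to show that the girth bound is genuinely incompatible with the manifold structure. The cleanest route I would pursue is to use that $\Sigma$, being the $d$-skeleton of a manifold with $\beta_d \neq 0$, contains a $d$-cycle supported on a $d$-pseudomanifold; in a $d$-pseudomanifold without boundary each $(d-1)$-face lies in exactly two $d$-faces, whence $d \cdot f_d \le 2 f_{d-1}$ restricted to that subcomplex. Combined with the $2$-complete basis forcing $\mathrm{g}(\Sigma) \ge d+2$ and the exact face count from Lemma~\ref{BetaEf}, the two inequalities should pinch $\beta_d$ to zero. Concretely, since Theorem~\ref{thm1a} yields $(d+2)(\beta_d+1) \le 2 f_d$ while the manifold's double-counting of $d$-faces across $(d+1)$-faces gives $2 f_d(M) = (d+2) f_{d+1}(M)$, and since $\beta_d(M) \neq 0$ forces $f_{d+1}(M)$ to be strictly less than what a $(d+1)$-cycle would require, the comparison collapses.

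The hard part will be making the final incompatibility airtight without over-relying on the pseudomanifold reduction, since one must ensure that the $d$-cycle witnessing $\beta_d \neq 0$ genuinely lives in $\Sigma$ and that the girth lower bound $d+2$ interacts correctly with Theorem~\ref{thm1a}. I expect the cleanest closing argument is the following: Theorem~\ref{2fold} forces a $2$-complete basis, so each $d$-face lies in the support of at most two basis cycles, but the manifold hypothesis with $\beta_d(M) \neq 0$ produces a $d$-cycle (the Poincar\'e dual of a nonzero class) whose support is \emph{all} of $M$'s $d$-faces meeting a fundamental-class-like chain, and such total coverage is incompatible with the sparsity demanded by $2$-completeness together with the girth inequality. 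I would verify that substituting $\beta_d(M) \ge 1$ into $(d+2)(\beta_d+1) \le 2 f_d$ together with the exact Euler-characteristic identity for a closed manifold yields a strict violation, thereby contradicting PL-embeddability.
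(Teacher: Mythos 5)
Your overall strategy --- combine Theorem~\ref{2fold} with Theorem~\ref{thm1a} and the manifold identity $(d+2)f_{d+1}(M)=2f_d(M)$ to violate $\mathrm{g}(\Sigma)\left(\beta_d(\Sigma)+1\right)\le 2f_d(\Sigma)$ --- is exactly the paper's, but your opening computation contains an error that breaks the argument. You assert that $\beta_d(\Sigma;\Z_2)=\beta_d(M;\Z_2)$ because ``$H_d$ of a complex depends only on $d$- and $(d-1)$-chains.'' This is backwards: $H_d=\ker\partial_d/\mathrm{im}\,\partial_{d+1}$, so it depends on the $(d+1)$-chains as well, and deleting the $(d+1)$-faces kills the boundaries. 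Hence $H_d(\Sigma;\Z_2)$ is the full cycle group $Z_d(M;\Z_2)$ and is in general much larger than $H_d(M;\Z_2)$. Quantitatively, Lemma~\ref{BetaEf} (which you name as ``the natural tool'' but never actually apply) gives $\beta_d(\Sigma)=f_{d+1}(M)-\beta_{d+1}(M)+\beta_d(M)$. With your identification, the inequality you want to violate becomes $(d+2)\left(\beta_d(M)+1\right)\le 2f_d(M)=(d+2)f_{d+1}(M)$, i.e.\ $\beta_d(M)+1\le f_{d+1}(M)$, which holds for essentially every triangulation --- so no contradiction arises and the proof does not close.

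The repair is precisely the paper's two-line count. Since $M$ is a closed connected $(d+1)$-manifold one has $\beta_{d+1}(M;\Z_2)=1$, so Lemma~\ref{BetaEf} together with the hypothesis $\beta_d(M)\ge 1$ yields $\beta_d(\Sigma)=f_{d+1}(M)-1+\beta_d(M)\ge f_{d+1}(M)$, whence
$(d+2)\left(\beta_d(\Sigma)+1\right)>(d+2)f_{d+1}(M)=2f_d(M)=2f_d(\Sigma)$,
contradicting \eqref{eq2} (via $\mathrm{g}(\Sigma)\ge d+2$, which you did identify correctly). The material in your third and fourth paragraphs --- pseudomanifold supports, Poincar\'e duals ``covering all $d$-faces,'' and the sparsity-versus-coverage heuristic --- is neither needed nor made precise enough to substitute for this computation, and I would drop it.
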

\begin{proof}
Let $M$ denote the $(d+1)$-dimensional manifold in question.
We know from Lemma \ref{BetaEf} that
\[f_{d+1}(M) = \beta_{d+1}(M;\Z_2)-\beta_{d} (M;\Z_2) +\beta_{d}(\Sigma;\Z_2).\]
Since $M$ is a manifold, one has $(d+2)f_{d+1}(M)=2f_{d}(M)$
and $\beta_{d+1}(M)=1$. This, together with 
the assumption $\beta_{d}(M)\ge 1$
 imply that 
\[
(d+2)\left(\beta_{d}(\Sigma)+1 \right) =
(d+2)\left(\beta_{d+1}(M)+\beta_d(\Sigma) \right)>
(d+2) f_{d+1}(M) =2f_{d}(M), \]
which  violates  the inequality \eqref{eq2} of Theorem 2.
Therefore, $\Sigma$ is not PL embeddable into $\R^{d+1}$.
Also the inequality \eqref{eq3}  is violated.

Observe, however, that 
Gr\"{u}nbaum's inequality  \eqref{Geq} is satisfied by $f(\Sigma)$.  
This follows from 
Proposition \ref{fG}, since $\Sigma$ is PL  embeddable into a 
$(d+1)$-dimensional manifold, namely $M$.
\end{proof}

\begin{ex}{\rm
As a concrete example of this type, one may take $T$ to be a triangulation 
of the $3$-torus with $f(T)=(15,105,180,90)$. Such a triangulation exists and happens to be the smallest (w.r.t. the $f$-vector) known triangulation of the $3$-torus $S^1\times S^1\times S^1$. See~\cite[Table 7]{Lutz} for instance. Let $\Sigma$ be the $2$-skeleton of $T$. Then one has $f(\Sigma)=(15,105,180)$ and $\beta(\Sigma)=(1,3,92)$. 
}
\end{ex}

\subsection*{Acknowledgement} We thank the referees for very useful remarks. 
Also, we thank Uli Wagner for valuable comments and for
bringing Gr\"{u}nbaum's paper ~\cite{Gru}  to our attention. 

This research was supported by the grant
VR-2015-05308 from Vetenskapsr{\aa}det. Part of the research of the second author has been made possible by the grant KAW-stipendiet 2015.0360 from the Knut and Alice Wallenberg's Fondation.


\def\cprime{$'$}
\providecommand{\bysame}{\leavevmode\hbox to3em{\hrulefill}\thinspace}
\providecommand{\MR}{\relax\ifhmode\unskip\space\fi MR }
\providecommand{\MRhref}[2]{%
  \href{http://www.ams.org/mathscinet-getitem?mr=#1}{#2}
}

\end{document}